\DeclareMathAlphabet\mathcalbf{OMS}{cmsy}{b}{n}
\DeclareMathAlphabet\EuScript{U}{eus}{m}{n}
\DeclareMathAlphabet\EuScriptBold{U}{eus}{b}{n}
\numberwithin{equation}{section}
\newtheorem{theorem}{Theorem}[section]
\newtheorem{lemma}[theorem]{Lemma}
\def\C{\mathbb C}
\begin{document}
\allowdisplaybreaks

\title[Boundedness and compactness of Cauchy-type integral commutator]
{Boundedness and compactness of  Cauchy-type integral commutator on weighted Morrey spaces}

\author{Ruming Gong, Manasa N. Vempati, Qingyan Wu and Peizhu Xie}

\address{Ruming Gong, School of Mathematics and Information Science\\
         Guangzhou University\\
         Guangzhou, 510006 , China
         }
\email{gongruming@gzhu.edu.cn}

\address{Manasa N. Vempati,
Department of Mathematics,\\
Washington University -- St. Louis,\\
One Brookings Drive,
St. Louis, MO USA 63130-4899
}

\email{m.vempati@wustl.edu}

\address{Qingyan Wu, Department of Mathematics\\
         Linyi University\\
         Shandong, 276005, China
         }
\email{wuqingyan@lyu.edu.cn}

\address{Peizhu Xie, School of Mathematics and Information Science\\
         Guangzhou University\\
         Guangzhou, 510006 , China
         }
\email{xiepeizhu@gzhu.edu.cn}

\subjclass[2010]{30E20, 32A50, 32A55, 32A25, 42B20, 42B35}
\keywords{Cauchy type integrals, domains in $\C^n$, BMO space, VMO space, commutator}

\begin{abstract}
In this paper we study the boundedness and compactness characterizations of the commutator of  Cauchy type integrals $\EuScript C$  on a bounded strongly pseudoconvex domain $D$ in $\C^n$ with boundary $bD$ satisfying the minimum regularity condition $C^{2}$ based on the recent result of Lanzani--Stein and Duong--Lacey--Li--Wick--Wu. We point out that in this setting the Cauchy type integral $\EuScript C$ is the sum of the essential part $\EuScript C^\sharp$ which is a Calder\'on--Zygmund operator and a remainder $\EuScript R$ which is no longer a Calder\'on--Zygmund operator. We show that the commutator $[b, \EuScript C]$  is bounded on weighted Morrey space $L_{v}^{p,\kappa}(bD)$ ($v\in A_p, 1<p<\infty$)  if {\color{black}and only if}\ $b$ is in the BMO space on $bD$. Moreover, the commutator $[b, \EuScript C]$  is compact on weighted Morrey space $L_{v}^{p,\kappa}(bD)$ ($v\in A_p, 1<p<\infty$)  if {\color{black}and only if}\ $b$ is in the VMO space on $bD$.

\end{abstract}

\maketitle


\section{Introduction and statement of main results}
\setcounter{equation}{0}

\smallskip
Recently in the field of the interaction of complex analysis of several variables and harmonic analysis, people are interested in strongly pseudoconvex domains with boundaries satisfying the minimum regularity condition of class $C^2$, since some new phenomena emerge in the
 analysis and geometry of this kind of domains.
Lanzani and Stein \cite{LS}
 studied the Cauchy--Szeg\H o projection operator  in such kind of domains by introducing
 a family of Cauchy integrals $\{\EuScript C_\epsilon\}_\epsilon$, and established the $L^p(bD)$ ($1<p<\infty$, with respect to the Leray--Levi measure) boundedness  of $\EuScript C_\epsilon$.
 Different from the case of smooth strongly pseudoconvex domains, the kernels of these Cauchy integral operators do not satisfy the standard size or smoothness conditions for Calder\'on--Zygmund operators. It is important to study the harmonic analysis of this kind of Cauchy--Szeg\H o operators and Cauchy integral operators for the purpose of complex analysis, for example, to establish the theory of holomorphic Hardy spaces over such domains.
  Duong, Lacey, Li, Wick and the third author \cite{DLLWW}  proved the  boundedness and compactness for commutators of such Cauchy type integral operators. By abuse of notations, we omit the subscript $\epsilon$.


  \medskip\noindent
{\bf Theorem A (\cite{DLLWW}).} {\it
Suppose $D\subset \mathbb C^n$, $n\geq 2$, is  a bounded domain whose boundary is of class $C^2$ and is strongly pseudoconvex.
Suppose $b\in L^1(bD, d\lambda)$. Then for $1<p<\infty$,

$(1)$  $b\in{\rm BMO}(bD,d\lambda)$ if and only if the commutator $[b, \EuScript C]$ is bounded on  $L^p(bD, d\lambda)$.

$(2)$   $b\in{\rm VMO}(bD,d\lambda)$ if and only if the commutator $[b, \EuScript C]$ is compact on  $L^p(bD, d\lambda)$.

}

  \medskip

  Because the characterization of the boundedness and compactness of Calder\'on--Zygmund operator commutators on certain function spaces has many applications in various areas,
  such as harmonic analysis, complex analysis, (nonlinear) PDE, etc, it is an active direction recently to establish such a characterization for  singular integral operators (especially non-Calder\'on--Zygmund operators) over various function spaces. For example, it was proved for Calder\'on--Zygmund operator commutators on Morrey spaces over the Euclidean space by
  Di Fazio and Ragusa \cite{DiFazioRagusa91BUMIA} and Chen et al.\,\cite{CDW12CJM} , and necessary part by Komori and Shirai \cite{KS} on weighted Morrey space. The characterization for
  Cauchy integral and Beurling-Ahlfors transformation commutator on $\mathbb C$ over weighted Morrey space was established by Tao et al.\,\cite{TYY,TYY2}.
   In this paper, we extend the characterization of the boundedness and compactness of  Cauchy integral operator in Theorem A to weighted Morrey space.

\ Let $p\in (1,\infty)$.  A non-negative function $v\in L^1(bD)$ is in  $A_{p}(bD)$ if
\begin{align*}
[v]_{A_p(bD)}:=\sup_{B\subset bD}\left( \frac{1}{|B|}\int_{B}v(z)d\lambda(z)\right) \left( \frac{1}{|B|}%
\int_{B}v(z) ^{-1/(p-1)}d\lambda(z)\right) ^{p-1}<\infty,
\end{align*}
where the supremum is taken over all balls $B$ in $bD$.
A non-negative function $v\in L^1(bD)$ is in  $A_{1}(bD)$ if there exists a constant $C$
such that for all balls $B\subset bD$,
\begin{equation*}
\frac{1}{|B|}\int_{B}v( z) d\lambda(z)\leq C\mathop{\rm essinf}%
\limits_{x\in B}v(z) .
\end{equation*}%
For $p=\infty$, we define
\begin{equation*}
A_{\infty }(bD)= \bigcup_{1\leq p<\infty }A_{p}(bD).
\end{equation*}

Let $p\in(1,\infty)$, $\kappa\in(0,1)$ and $v\in A_p(bD)$.
The \emph{weighted Morrey space} $L_v^{p,\,\kappa}(bD)$ (c.f. \cite{Kok} )
 is defined by
\begin{equation*}
L_v^{p,\,\kappa}(bD):=\left\{f\in L_{loc}^{p}(bD):\,\,\|f\|_{L_v^{p,\,\kappa}(bD)}<\infty\right\}
\end{equation*}
with
\begin{equation*}
\|f\|_{L_v^{p,\,\kappa}(bD)}:=\sup_{B}
\left\{\frac{1}{[v(B)]^\kappa}\int_{B}|f(z)|^pv(z)\,d\lambda(z)\right\}^{1/p},
\end{equation*}
where
$$v(B)=\int_B v(z)d\lambda(z).$$
For the history of Morrey spaces one can refer to \cite{AX}.

The main result of our paper is to characterize the boundedness and compactness of  the commutator of Cauchy type integral  $\EuScript C$ on the weighted Morrey space, following the idea and approach in \cite{DLLWW}.



\begin{theorem}\label{cauchy}
Suppose $D\subset \mathbb C^n$, $n\geq 2$, is  a bounded domain whose boundary is of class $C^2$ and is strongly pseudoconvex.
Suppose $b\in L^1(bD),1<p<\infty$, $0<\kappa<1$ and $v\in A_p$. Then,
 $b\in{\rm BMO}(bD)$ if and only if the commutator $[b, \EuScript C]$ is bounded on  $L_{v}^{p,\kappa}(bD)$.
\end{theorem}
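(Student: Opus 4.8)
The plan is to reduce the statement to Theorem~A together with standard weighted harmonic analysis on the underlying space of homogeneous type $bD$, exploiting the splitting $\EuScript C=\EuScript C^\sharp+\EuScript R$ recorded above, so that $[b,\EuScript C]=[b,\EuScript C^\sharp]+[b,\EuScript R]$. On $bD$ we have at our disposal all the usual tools: the doubling and (up to the diameter) reverse‑doubling properties, the $A_p$ duality $v\in A_p(bD)$ iff $v^{1-p'}\in A_{p'}(bD)$ (with $p'=p/(p-1)$), the reverse Hölder inequality for $A_p$ weights, and the weighted John--Nirenberg inequality, which in particular gives $\int_B|b-b_B|^p v\,d\lambda\lesssim\|b\|_{\mathrm{BMO}}^p\,v(B)$ for $v\in A_p$; on top of these we use the kernel estimates for $\EuScript C^\sharp$ and $\EuScript R$ from \cite{LS,DLLWW}. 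We handle the two implications, and within the forward one the operators $\EuScript C^\sharp$ and $\EuScript R$, separately.

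\emph{Sufficiency, Calder\'on--Zygmund part.} Assume $b\in\mathrm{BMO}(bD)$. For $[b,\EuScript C^\sharp]$ we run the localization argument of Komori--Shirai \cite{KS} (cf.\ \cite{DiFazioRagusa91BUMIA,CDW12CJM}), transplanted from $\R^n$ to $bD$. Fix a ball $B\subset bD$ and split $f=f\chi_{2B}+f\chi_{bD\setminus 2B}$. The contribution of $f\chi_{2B}$ is absorbed by the weighted estimate $\|[b,\EuScript C^\sharp]g\|_{L^p(v)}\le C([v]_{A_p})\,\|b\|_{\mathrm{BMO}}\,\|g\|_{L^p(v)}$ --- the Coifman--Rochberg--Weiss bound on a space of homogeneous type, valid because $\EuScript C^\sharp$ is a Calder\'on--Zygmund operator there --- together with $v(2B)\lesssim v(B)$. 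For $f\chi_{bD\setminus 2B}$ one uses, for $z\in B$, the identity $[b,\EuScript C^\sharp](f\chi_{bD\setminus 2B})(z)=(b(z)-b_B)\,\EuScript C^\sharp(f\chi_{bD\setminus 2B})(z)-\EuScript C^\sharp\big((b-b_B)f\chi_{bD\setminus 2B}\big)(z)$, decomposes the remaining integrals over the annuli $2^{j+1}B\setminus 2^jB$, and estimates each piece with the size bound for the kernel of $\EuScript C^\sharp$, the oscillation bound $|b_{2^{j+1}B}-b_B|\lesssim j\,\|b\|_{\mathrm{BMO}}$, Hölder's inequality, and the $A_p$ condition for $v$ and $v^{1-p'}$; the $j$-th term is bounded by $C\,j\,\|b\|_{\mathrm{BMO}}\,v(2^{j+1}B)^{(\kappa-1)/p}\,\|f\|_{L_v^{p,\kappa}(bD)}$, and since $0<\kappa<1$ and $v$ is reverse doubling one gets $\sum_j j\,v(2^{j+1}B)^{(\kappa-1)/p}\lesssim v(B)^{(\kappa-1)/p}$. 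Raising to the $p$-th power, integrating over $B$ against $v$, and applying the weighted John--Nirenberg inequality to the factors $\int_B|b-b_B|^p v$ yields $\|[b,\EuScript C^\sharp]f\|_{L_v^{p,\kappa}(bD)}\lesssim\|b\|_{\mathrm{BMO}}\,\|f\|_{L_v^{p,\kappa}(bD)}$.

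\emph{Sufficiency, remainder term.} Because $\EuScript R$ is \emph{not} a Calder\'on--Zygmund operator, its commutator falls outside the scope of \cite{KS}; this is the step we expect to be the main obstacle. The remedy is to use the sharper description of the kernel of $\EuScript R$ from \cite{LS,DLLWW}, according to which it is strictly more regular than a Calder\'on--Zygmund kernel on $bD$ --- in particular it gains a power of decay at the diagonal. With this, the usual annular decomposition dominates $[b,\EuScript R]f$, for $b\in\mathrm{BMO}(bD)$, by $\|b\|_{\mathrm{BMO}}$ times a combination of the Hardy--Littlewood maximal function of $f$, a term of type $|b-b_{B_0}|\,Mf$ treated via the local exponential integrability of $b$ and Hölder's inequality, and a bounded term coming from the off‑diagonal part of the kernel; equivalently, $[b,\EuScript R]$ is controlled by a commutator of a small‑order fractional integral with a $\mathrm{BMO}$ function. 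Each of the resulting operators is bounded on $L_v^{p,\kappa}(bD)$ for $v\in A_p$ by known weighted Morrey estimates (\cite{KS} and its extensions), once one accounts for the fact that $bD$ is compact, so that any gain of integrability is harmless and bounded functions lie in $L_v^{p,\kappa}(bD)$. The delicate point is simply to check that the precise estimates of \cite{LS,DLLWW} for the kernel of $\EuScript R$ feed correctly into this machinery; no new phenomenon arises. Together with the previous paragraph this proves that $b\in\mathrm{BMO}(bD)$ implies $[b,\EuScript C]$ is bounded on $L_v^{p,\kappa}(bD)$.

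\emph{Necessity.} Conversely, suppose $[b,\EuScript C]$ is bounded on $L_v^{p,\kappa}(bD)$. We use the non‑degeneracy of the Cauchy kernel established in \cite{LS} and exploited in \cite{DLLWW}: to every ball $B=B(z_0,r)\subset bD$ one can associate a ``twin'' ball $\tilde B=B(\tilde z_0,r)$ with $d(z_0,\tilde z_0)\sim r$ on which, after a unimodular rotation, the kernel of $\EuScript C$ has size $\sim\lambda(B)^{-1}$ and almost constant argument, the lower‑order term $\EuScript R$ being harmless at these scales. Testing $[b,\EuScript C]$ against $g=\overline{\mathrm{sgn}(\,\cdot\,)}\,\chi_{\tilde B}$, with the unimodular factor chosen so as to isolate the oscillation of $b$ over $B$, and restricting to $z\in B$, one obtains for a suitable constant $c_{\tilde B}$ the bound $\big(\int_B|b-c_{\tilde B}|^p v\,d\lambda\big)^{1/p}\lesssim\|[b,\EuScript C]g\|_{L_v^{p,\kappa}(bD)}\,v(B)^{\kappa/p}$, while a direct computation gives $\|g\|_{L_v^{p,\kappa}(bD)}\le v(\tilde B)^{(1-\kappa)/p}$. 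Since $\tilde B$ is comparable to $B$ and $v$ is doubling, these combine to $\big(v(B)^{-1}\int_B|b-c_{\tilde B}|^p v\,d\lambda\big)^{1/p}\lesssim\|[b,\EuScript C]\|_{L_v^{p,\kappa}(bD)\to L_v^{p,\kappa}(bD)}$, uniformly in $B$. Finally, Hölder's inequality together with the $A_p$ condition for $v$ turns this weighted oscillation estimate into the unweighted one $\frac{1}{\lambda(B)}\int_B|b-b_B|\,d\lambda\lesssim\|[b,\EuScript C]\|$ for all balls $B$, that is, $b\in\mathrm{BMO}(bD)$. This completes the proof.
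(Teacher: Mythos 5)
Your overall strategy coincides with the paper's: split $\EuScript C=\EuScript C^\sharp+\EuScript R$, invoke Komori--Shirai \cite{KS} for $[b,\EuScript C^\sharp]$, handle $[b,\EuScript R]$ through the improved size bound $|R(w,z)|\lesssim {\tt d}(w,z)^{-(2n-1)}$, and prove the converse by testing on (essentially) characteristic functions of a nearby twin ball and computing $\|\chi_{\tilde B}\|_{L_v^{p,\kappa}}\lesssim v(\tilde B)^{(1-\kappa)/p}$. However, two steps of your converse are not justified as written. First, the non-degeneracy available from \cite{LS,DLLWW} is weaker than what you use: for balls of radius $r<\gamma_0$ it only says that \emph{one} of $\mathrm{Re}\,C(w,z)$, $\mathrm{Im}\,C(w,z)$ keeps a fixed sign on $B\times\tilde B$ with $|C_i(w,z)|\gtrsim {\tt d}(w,z)^{-2n}$; there is no statement that the kernel has ``almost constant argument'' after a unimodular rotation, and at $C^2$ regularity no such phase control is known. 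Consequently a single test function $\overline{\mathrm{sgn}(\cdot)}\,\chi_{\tilde B}$ does not isolate $|b-c_{\tilde B}|$: the sign of $b(z)-c_{\tilde B}$ varies with $z\in B$ and must be decoupled from the sign of the kernel. The paper (following \cite{DLLWW}) does this with the median value $m_b(\tilde B)$ and a splitting $B=E_1\cup E_2$, $\tilde B=F_1\cup F_2$ with $\lambda(F_i)\geq \lambda(\tilde B)/2$, so that on $E_i\times F_i$ both $b(z)-b(w)$ and the chosen kernel component have fixed sign; your sketch needs this (or an equivalent device) to go through. Second, the twin-ball lemma exists only for $r<\gamma_0$; for balls with $r\geq\gamma_0$ the oscillation has to be bounded directly by $\gamma_0^{-2n}\|b\|_{L^1(bD)}$ using the hypothesis $b\in L^1(bD)$. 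This large-radius case is absent from your argument, and without it the supremum over all balls defining $\|b\|_*$ is not controlled.

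In the forward direction your outline matches the paper's, but the reduction of $[b,\EuScript R]$ to ``a commutator of a small-order fractional integral, bounded by known weighted Morrey estimates'' is precisely where the work sits and cannot simply be cited: \cite{KS} treats $[b,I_\alpha]$ as a map between \emph{different} Morrey scales under $A_{p,q}$-type conditions, not the $p\to p$, $v\in A_p$ statement you need, and a term of the form $|b-b_B|\,Mf$ on the local part is not disposed of by H\"older alone. The paper instead bounds the local piece $f\chi_{2B}$ by the weighted $L^p_v(bD)$ estimate for $[b,\EuScript R]$ from \cite{DLLW}, and the far piece by the annular decomposition together with H\"older against $v$ and $v^{1-p'}$, the weighted John--Nirenberg inequality (Lemma \ref{bmoqq}), and the decay $v(B)^{(1-\kappa)/p}/v(2^{j}B)^{(1-\kappa)/p}\lesssim 2^{-j(1-\kappa)2n\sigma/p}$ from Lemma \ref{lv}; compactness of $bD$ and the one-power gain in the kernel enter exactly through these estimates (the terms $II_1$, $II_2$ in the paper). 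Your plan is repairable along these lines, but as stated it leaves both the remainder estimate and the converse's key lower-bound mechanism unproved.
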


\begin{theorem}\label{vmo}
Suppose $D\subset \mathbb C^n$, $n\geq 2$, is  a bounded domain whose boundary is of class $C^2$ and is strongly pseudoconvex.
Suppose $b\in L^1(bD),1<p<\infty$, $0<\kappa<1$ and $v\in A_p$. Then,
 $b\in{\rm VMO}(bD)$ if and only if the commutator $[b, \EuScript C]$ is compact on  $L_{v}^{p,\kappa}(bD)$.
\end{theorem}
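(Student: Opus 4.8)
\textbf{Proposed proof of Theorem \ref{vmo}.}
The plan is to use the splitting $\EuScript C=\EuScript C^\sharp+\EuScript R$ (the first a Calder\'on--Zygmund operator, the second less singular), to prove the two implications separately, and to take the boundedness theorem (Theorem \ref{cauchy}) as given. The common tool will be a Fr\'echet--Kolmogorov--Riesz type description of the relatively compact subsets of $L_v^{p,\kappa}(bD)$: modelled on the Euclidean weighted-Morrey versions of Tao--Yang--Yuan \cite{TYY,TYY2} and transported to the quasi-metric measure space $bD$ through a finite atlas, it says that a norm-bounded family is precompact once it is uniformly continuous under the translations of the atlas charts and its Morrey averages over balls of radius $<\eta$ are uniformly small as $\eta\to0$. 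Establishing this criterion is the first step, and it is needed precisely because $L_v^{p,\kappa}(bD)$ is neither reflexive nor separable, so the usual weak-compactness shortcuts are unavailable.

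\emph{Sufficiency.} Let $b\in\mathrm{VMO}(bD)$. We show $[b,\EuScript C^\sharp]$ and $[b,\EuScript R]$ are each compact on $L_v^{p,\kappa}(bD)$; the same argument serves for both, so write $\EuScript T$ for either. Since $\mathrm{VMO}(bD)$ is the $\mathrm{BMO}(bD)$-closure of the Lipschitz functions on $bD$, choose Lipschitz $b_j\to b$ in $\mathrm{BMO}$. For fixed $j$ the kernel of $[b_j,\EuScript T]$, namely $K_{\EuScript T}(x,y)\,(b_j(x)-b_j(y))$, becomes a bounded kernel on the compact set $bD\times bD$ once we excise $\{d(x,y)<\eta\}$; by the precompactness criterion the truncated operator is compact, and letting $\eta\to0$ — using the Calder\'on--Zygmund smoothness of $K^\sharp$ (resp. the integrable singularity of the kernel of $\EuScript R$) together with the Lipschitz bound on $b_j$ — shows the truncations converge to $[b_j,\EuScript T]$ in operator norm, so $[b_j,\EuScript T]$ is compact. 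Finally $\|[b,\EuScript T]-[b_j,\EuScript T]\|_{L_v^{p,\kappa}\to L_v^{p,\kappa}}=\|[b-b_j,\EuScript T]\|\lesssim\|b-b_j\|_{\mathrm{BMO}}\to0$ by the estimates behind Theorem \ref{cauchy}, and since the compact operators on $L_v^{p,\kappa}(bD)$ form a norm-closed subspace, $[b,\EuScript T]$ is compact. Adding the two pieces gives the compactness of $[b,\EuScript C]$.

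\emph{Necessity.} If $[b,\EuScript C]$ is compact it is bounded, so $b\in\mathrm{BMO}(bD)$ by Theorem \ref{cauchy}; we must improve this to $b\in\mathrm{VMO}(bD)$. Suppose not: there are $\delta_0>0$ and balls $B_k=B(x_k,r_k)\subset bD$ with $r_k\to0$ and $\textaver{B_k}|b-b_{B_k}|>\delta_0$. Following the Uchiyama-type scheme of \cite{DLLWW}, split $B_k$ into the sign sets of $b-b_{B_k}$, use the non-degeneracy of the Calder\'on--Zygmund kernel $K^\sharp$ (a lower bound on a sector of directions) to select a ball $\widetilde B_k=B(y_k,cr_k)$ with $d(B_k,\widetilde B_k)\sim r_k$ on which $K^\sharp(x,\cdot)$ has essentially constant argument for $x\in B_k$, and let $f_k$ be a sign-twisted indicator of (a subset of) $\widetilde B_k$, normalized so that $\|f_k\|_{L_v^{p,\kappa}(bD)}=1$. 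A direct lower estimate of $[b,\EuScript C^\sharp]f_k$ on $B_k$ then gives $\|[b,\EuScript C]f_k\|_{L_v^{p,\kappa}(bD)}\gtrsim\delta_0$, the remainder contributing only $o(1)$ because its kernel is much less singular while $\widetilde B_k,B_k$ shrink. Since the $f_k$ concentrate on balls with $v(\widetilde B_k)\to0$, one checks $\|f_k\|_{L^1(bD)}\to0$, hence $f_k\to0$ in the weak-$\ast$ topology of $L_v^{p,\kappa}(bD)$ (as the dual of its predual block space); a compact $[b,\EuScript C]$ would then map $(f_k)$ to a norm-null sequence, contradicting $\|[b,\EuScript C]f_k\|\gtrsim\delta_0$. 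Therefore $b\in\mathrm{VMO}(bD)$.

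\textbf{Main obstacle.} The heart of the matter is the precompactness criterion on $L_v^{p,\kappa}(bD)$ together with its companion tail estimates: one must handle at once the weight $v\in A_p$, the Morrey averaging (so that ``uniform smallness at small scales'' enters the very definition of precompactness), and the fact that $bD$ carries only a Kor\'anyi-type quasi-metric while the space is non-reflexive and non-separable — this is what forces the weak-$\ast$/predual formulation in the necessity part and makes controlling the Morrey norm of $[b,\EuScript C]f_k$ away from $B_k$ the subtle point. A secondary difficulty is the careful bookkeeping of the non-Calder\'on--Zygmund remainder $\EuScript R$, where the extra smoothing of the Lanzani--Stein kernel must be used to show that its commutator never obstructs either implication.
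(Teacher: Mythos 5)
The decisive gap is in your ``compactness implies VMO'' direction. You construct normalized functions $f_k$ supported on shrinking balls with $\|[b,\EuScript C]f_k\|_{L_v^{p,\kappa}(bD)}\gtrsim\delta_0$ (this part is essentially the paper's construction: the non-degeneracy Lemma \ref{nc}, the median-value splitting, and the normalization $f_j=v(B_j)^{(\kappa-1)/p}\chi_{\widetilde F_{j,1}}$), but you then conclude by asserting that $f_k\to 0$ in the weak-$*$ topology of $L_v^{p,\kappa}(bD)$ (viewed as the dual of a block-type predual) and that a compact operator must send a weak-$*$ null sequence to a norm-null sequence. That implication is false: compact operators are completely continuous with respect to \emph{weak} convergence, not weak-$*$ convergence. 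For example, in $\ell^1=(c_0)^*$ the unit vectors $e_k$ are weak-$*$ null, yet the rank-one (hence compact) operator $Tx=\bigl(\sum_k x_k\bigr)y$ with $y\neq0$ satisfies $Te_k=y$ for every $k$. Since $L_v^{p,\kappa}(bD)$ is non-reflexive and non-separable, weak-$*$ cannot be upgraded to weak convergence, and verifying genuine weak nullity of the $f_k$ would require control of the full dual, which is not available (the existence and identification of a weighted block predual over the space of homogeneous type $bD$ is itself asserted, not proved). The paper avoids this entirely: compactness yields a subsequence with $[b,\EuScript C]f_{j_i}\to g_0\neq0$ in norm; after a further selection one gets disjointly supported $g_j$ with $\|g_0-[b,\EuScript C]g_j\|_{L_v^{p,\kappa}(bD)}<2^{-j}$, and then, taking $a_j=j^{-(p+1)/(2p)}\in\ell^p\setminus\ell^1$ and $\gamma=\sum_j a_jg_j\in L_v^{p,\kappa}(bD)$, one reaches the contradiction that $\|g_0\sum_j a_j\|_{L_v^{p,\kappa}(bD)}<\infty$ although $\sum_j a_j=\infty$ and $g_0\neq0$; in other words, no bounded operator can send an approximate $\ell^p$ basis to a single nonzero vector. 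Your final step must be replaced by an argument of this type (or by an Uchiyama-style lower bound on $\|[b,\EuScript C](f_k-f_m)\|$ ruling out Cauchy subsequences).

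In the converse direction (VMO implies compactness) your plan hinges on a Fr\'echet--Kolmogorov--Riesz precompactness criterion for $L_v^{p,\kappa}(bD)$ which you do not prove; such criteria for Morrey-type spaces are delicate (nice functions are not dense and the space is non-separable), so ``transporting it through a finite atlas'' is not a routine step but a second substantive gap, and with only a bounded (not continuous) truncated kernel the compactness of the truncations does not follow automatically. The paper proceeds differently and more economically: $[b,\EuScript C^\sharp]$ is handled by known compactness results for Calder\'on--Zygmund commutators on weighted Morrey spaces; for $[b,\EuScript R]$ one approximates $b$ by $b_\xi\in BUC(bD)$ with $\|b-b_\xi\|_*<\xi$ (Lemma \ref{lemma 1 KL2}), replaces the kernel by a continuous truncation $R^\eta$ with operator-norm error controlled via the maximal function (Lemma \ref{lemma 2 KL2}), and then shows that $[b,\EuScript R^\eta]$ maps the unit ball of $L_v^{p,\kappa}(bD)$ into a bounded, equicontinuous family of continuous functions, so compactness follows from the Ascoli--Arzel\`a theorem --- no Morrey-space compactness criterion is needed. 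Your Lipschitz-approximation-plus-truncation scheme could be repaired along these lines, but as written the key compactness inputs in both directions are either missing or incorrect.
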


We also consider the Cauchy--Leray integral on a bounded domain in $\mathbb C^n$, which is strongly $\C$-linearly convex  and the boundary $bD$ satisfies the minimum regularity $C^{1,1}$  (for the details we refer to Section 3 below), such integral operators are studied by Lanzani and Stein in \cite{LS2014}.
They obtained the $L^p(bD)$ boundedness ($1<p<\infty$) of the  Cauchy--Leray transform $\mathcal C$ by showing that the kernel $K(w,z)$ of $\mathcal C$ satisfies the standard size and smoothness conditions of Calder\'on--Zygmund operators (for details of these definitions and notation, we refer the readers to Section 3), and
that  $\mathcal C$ satisfies a  suitable version of $T(1)$ theorem.  In \cite{DLLWW}, the authors also obtained the  boundedness and compactness for commutators of such transform.
Following a similar approach as in the proof of Theorem \ref{cauchy} and Theorem \ref{vmo}, we obtain the following results on  the Cauchy--Leray transform and its commutator.

\begin{theorem}\label{weight1}
Let $D$ be a bounded domain in $\mathbb C^n$ of class $C^{1,1}$ that is strongly $\mathbb C$-linearly convex. Let $1<p<\infty, v\in A_p$.  Then there exists a positive constant $C$ such that
\begin{align}\label{th1}
\|\mathcal C(f)\|_{L_{v}^{p}(bD)}\leq C \|f\|_{L_{v}^{p}(bD)}
\end{align}
holds for every function $f\in L_{v}^{p}(bD).$
\end{theorem}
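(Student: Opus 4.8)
The plan is to deduce Theorem~\ref{weight1} from the unweighted $L^2$ theory of Lanzani--Stein together with the weighted Calder\'on--Zygmund theory on spaces of homogeneous type. The key structural feature that makes this setting simpler than that of Theorem~\ref{cauchy} is that, for a domain of class $C^{1,1}$ that is strongly $\mathbb C$-linearly convex, the Cauchy--Leray kernel is a genuine Calder\'on--Zygmund kernel: there is no remainder term $\EuScript R$ to be handled separately.

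First I would recall, following \cite{LS2014}, that $bD$ equipped with the Leray--Levi measure $d\lambda$ and the quasi-metric associated with the Kor\'anyi-type balls adapted to the complex tangential/normal splitting of $bD$ is a space of homogeneous type; moreover $d\lambda$ is comparable to the induced surface measure, so that the Muckenhoupt class $A_p(bD)$ defined through these balls is precisely the $A_p$ class for this space of homogeneous type. Next I would invoke the main results of \cite{LS2014}: the kernel $K(w,z)$ of $\mathcal C$ satisfies the standard size bound and the H\"ormander-type cancellation/smoothness estimates relative to this quasi-metric, and $\mathcal C$ satisfies a suitable $T(1)$ theorem, hence is bounded on $L^2(bD,d\lambda)$. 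Together these say exactly that $\mathcal C$ is a Calder\'on--Zygmund operator on the space of homogeneous type $(bD,d\lambda)$.

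Finally I would apply the weighted boundedness of Calder\'on--Zygmund operators on spaces of homogeneous type: any such operator that is bounded on $L^2$ extends to a bounded operator on $L^p_v(bD)$ for every $1<p<\infty$ and every $v\in A_p(bD)$. This is classical and can be obtained, for instance, via the Calder\'on--Zygmund decomposition and a good-$\lambda$ inequality, or via Rubio de Francia extrapolation, both of which are available in the homogeneous-space setting; this yields \eqref{th1} directly.

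The only point requiring care --- and the step I expect to be the sole real obstacle --- is bookkeeping: one must check that the family of balls used in the definition of $A_p(bD)$ in this paper agrees, up to the usual quasi-metric equivalence, with the balls for which the Lanzani--Stein kernel estimates and $T(1)$ theorem are phrased, so that the weighted machinery applies without modification. Once that identification is made, nothing beyond citing the standard weighted Calder\'on--Zygmund theorem on spaces of homogeneous type is needed.
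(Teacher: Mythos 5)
Your proposal is correct and follows essentially the same route as the paper: the paper simply observes that $\mathcal C$ is a genuine Calder\'on--Zygmund operator on the space of homogeneous type $(bD,{\tt d},d\lambda)$ (kernel estimates \eqref{ck} plus the unweighted $L^p$ boundedness from \cite[Theorem 5.1]{LS2014}) and refers to the proof of Theorem 1.5 in \cite{DLLW}, which amounts to exactly the standard weighted Calder\'on--Zygmund theory you invoke. Your version merely spells out that classical step (good-$\lambda$/extrapolation on spaces of homogeneous type) rather than citing it, so no further comparison is needed.
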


\begin{theorem}\label{Cauchy-Leray}
Let $D$ be a bounded domain in $\mathbb C^n$ of class $C^{1,1}$ that is strongly $\mathbb C$-linearly convex and let $b\in L^1(bD),  1<p<\infty,  0<\kappa<1$ and $v\in A_p$.
Let $\mathcal C$ be the Cauchy--Leray transform $($as in \cite{LS2014}$)$. Then for $1<p<\infty$,

$(1)$  $b\in{\rm BMO}(bD)$ if and only if
the commutator $[b, \mathcal C]$ is bounded on  $L_{v}^{p, \kappa}(bD)$.

$(2)$  $b\in{\rm VMO}(bD)$ if and only if
the commutator $[b, \mathcal C]$ is compact on  $L_{v}^{p, \kappa}(bD)$.
\end{theorem}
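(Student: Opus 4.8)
The plan is to reduce everything to the general theory of Calder\'on--Zygmund operator commutators on the weighted Morrey space $L_v^{p,\kappa}(bD)$, which has already been developed in the course of proving Theorems \ref{cauchy} and \ref{vmo}. The structural simplification that makes Theorem \ref{Cauchy-Leray} genuinely easier is that, by Lanzani--Stein \cite{LS2014}, the Cauchy--Leray transform $\mathcal C$ is itself a Calder\'on--Zygmund operator: its kernel $K(w,z)$ satisfies the standard size and smoothness estimates, it is non-degenerate in the sense needed for lower bounds, and $\mathcal C$ obeys a suitable $T(1)$ theorem, so it is bounded on $L^2(bD,d\lambda)$ and hence, by the usual weighted theory, which is Theorem \ref{weight1}, on $L_v^p(bD)$ for every $v\in A_p$ and, by the Komori--Shirai result \cite{KS}, on $L_v^{p,\kappa}(bD)$. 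In particular there is no remainder term $\EuScript C - \EuScript C^{\sharp}$ to handle separately, so the argument carried out for the essential part $\EuScript C^{\sharp}$ in Theorems \ref{cauchy}--\ref{vmo} applies to $[b,\mathcal C]$ verbatim.

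For part (1), the sufficiency direction ($b\in{\rm BMO}(bD)\Rightarrow[b,\mathcal C]$ bounded) follows from the pointwise sharp-maximal estimate $M^{\sharp}\big([b,\mathcal C]f\big)(z)\lesssim \|b\|_{{\rm BMO}(bD)}\big(M_s(\mathcal C f)(z)+M_s f(z)\big)$ for a fixed $s>1$, combined with the $L_v^{p,\kappa}(bD)$ boundedness of the Hardy--Littlewood maximal operator and of $\mathcal C$, and the Fefferman--Stein inequality on $L_v^{p,\kappa}(bD)$; this is precisely the scheme used for $\EuScript C^{\sharp}$. For the necessity direction I would reproduce the Uchiyama-type argument of \cite{DLLWW}: the non-degeneracy of $K(w,z)$ lets one attach to each ball $B\subset bD$ a companion ball $\widetilde B$ of comparable radius and comparable distance on which $K$ keeps a fixed sign and $|K|\gtrsim|B|^{-1}$; testing $[b,\mathcal C]$ against $\chi_{\widetilde B}$ (modified by a unimodular factor) and evaluating on $B$, then using the boundedness on $L_v^{p,\kappa}(bD)$ together with the doubling of $v\in A_p$, yields $\frac1{|B|}\int_B|b-\langle b\rangle_B|\,d\lambda\lesssim\|[b,\mathcal C]\|$, so $b\in{\rm BMO}(bD)$.

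For part (2), the sufficiency direction uses the density of smooth functions in ${\rm VMO}(bD)$ for the ${\rm BMO}$ seminorm: choose smooth $b_j$ with $b_j\to b$ in ${\rm BMO}(bD)$. For each $j$, the factor $b_j(w)-b_j(z)$ cancels the singularity of $K$ enough that $[b_j,\mathcal C]$ can be approximated in operator norm on $L_v^{p,\kappa}(bD)$ by integral operators with continuous kernels on the compact set $bD$, and such operators are compact on $L_v^{p,\kappa}(bD)$ --- one verifies the Fr\'echet--Kolmogorov-type precompactness criterion on weighted Morrey spaces, the tail condition being vacuous since $bD$ is compact. Since $\|[b-b_j,\mathcal C]\|_{L_v^{p,\kappa}\to L_v^{p,\kappa}}\lesssim\|b-b_j\|_{{\rm BMO}(bD)}\to0$ by part (1), $[b,\mathcal C]$ is a norm limit of compact operators, hence compact. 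For the necessity direction I would argue by contradiction: if $b\notin{\rm VMO}(bD)$ there are $\delta>0$ and balls $B_j=B(x_j,r_j)$ with $r_j\to0$ and $\frac1{|B_j|}\int_{B_j}|b-\langle b\rangle_{B_j}|\,d\lambda\ge\delta$; using the non-degeneracy of $K$ one constructs a sequence $f_j$, normalized in $L_v^{p,\kappa}(bD)$ and adapted to $B_j$, with $f_j\rightharpoonup0$ but $\|[b,\mathcal C]f_j\|_{L_v^{p,\kappa}(bD)}\gtrsim\delta$, so $\{[b,\mathcal C]f_j\}$ has no convergent subsequence, contradicting compactness.

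The part I expect to require the most care is the necessity of both statements: one must check that the non-degeneracy of the Cauchy--Leray kernel established in \cite{LS2014} and exploited in \cite{DLLWW} is in exactly the form demanded by the lower bounds above, and --- in the compactness argument --- that the localized test functions can be normalized so that $\|f_j\|_{L_v^{p,\kappa}(bD)}\sim1$ while the oscillation estimate for $b$ survives. This $A_p$- and Morrey-scaling bookkeeping is the only delicate point, and it does not require any idea beyond those already used for Theorems \ref{cauchy} and \ref{vmo}.
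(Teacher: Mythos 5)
Your overall route coincides with the paper's: Theorem \ref{Cauchy-Leray} is proved there in two lines by observing that, unlike $\EuScript C=\EuScript C^\sharp+\EuScript R$, the Cauchy--Leray transform $\mathcal C$ is itself a Calder\'on--Zygmund operator (estimates \eqref{ck} and \eqref{kd}), so the proofs of Theorems \ref{cauchy} and \ref{vmo} apply with the remainder term absent. Your treatment of part (1) and of the sufficiency half of part (2) is exactly that machinery: the Komori--Shirai/sharp-maximal upper bound, the Uchiyama-type lower bound via a companion ball (for $\mathcal C$ the non-degeneracy is immediate from $|K(w,z)|={\tt d}(w,z)^{-2n}$, though note that the sign problem coming from the factor $b(z)-b(w)$ is resolved in the paper by the median-value splitting of $B$ and $\widetilde B$ into $E_i,F_i$, not merely by a unimodular modification of $\chi_{\widetilde B}$), and the VMO approximation by uniformly continuous functions plus kernel truncation plus Ascoli--Arzel\`a.

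The one place where your proposal departs from the paper and, as written, has a gap is the necessity half of part (2). You construct normalized $f_j$ adapted to shrinking balls and claim $f_j\rightharpoonup 0$ in $L_v^{p,\kappa}(bD)$, then invoke complete continuity of compact operators. The weak convergence is the unproved step: $L_v^{p,\kappa}(bD)$ is not reflexive and its dual is not an $L^{p'}$-type space, so checking $\int f_j g\,d\lambda\to0$ for nice $g$ does not give $f_j\rightharpoonup0$ against the full dual, which contains functionals that concentrate exactly where your $f_j=v(B_j)^{(\kappa-1)/p}\chi_{\widetilde F_{j,1}}$ concentrate. This is precisely why the proof of Theorem \ref{vmo} (which Theorem \ref{Cauchy-Leray} simply re-runs with $\EuScript C$ replaced by $\mathcal C$) avoids weak convergence altogether: compactness gives a subsequence with $[b,\EuScript C]f_{j_i}\to g_0\neq0$ in norm, one passes to a further subsequence with disjointly supported data and $\|g_0-[b,\EuScript C]g_j\|<2^{-j}$, and then a sequence $\{a_j\}\in\ell^p\setminus\ell^1$ yields the contradiction that $g_0\sum_j a_j$ would have finite norm. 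Replacing your weak-convergence step by that argument (verbatim, with $\mathcal C$ in place of $\EuScript C$) closes the gap; the rest of your proposal matches the paper's intended proof.
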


\color{black}

This paper is organised as follows.  In Section 2 we recall the notation and definitions related to a family of Cauchy integrals for bounded strongly pseudoconvex domains in $\C^n$ with minimal smoothness, then we prove Theorems \ref{cauchy} and \ref{vmo}.
 In Section 3 we recall the notation and definitions related to the Cauchy-Leray integral for bounded $\C$-linearly convex domains in $\mathbb C^n$ with minimal smoothness and give the proof of Theorem \ref{weight1} and Theorem \ref{Cauchy-Leray}.


\section{Commutator of Cauchy type integral for bounded strongly pseudoconvex domains with minimal smoothness}
\setcounter{equation}{0}

In this section, we
always assume that $D$ is a bounded strongly pseudoconvex
domain {whose boundary is of class $C^2$.

Let ${\tt d}(w,z)$ be the quasidistance on the boundary $bD$, which is defined as in \cite[Section 2.3] {LS} and satisfies the following conditions: there exist constants $A_1>0$ and  $C_d>1$ such that for all $w,z,z'\in bD$,
\begin{equation}\label{metric d}
\begin{split}
& {\tt d}(w,z)=0\quad {\rm iff}\quad w=z;\\
 & {\tt d}(w,z)\leq C_d\big( {\tt d}(w,z') +{\tt d}(z',z)\big);\\
 &A_1^{-1} {\tt d}(z,w)\leq  {\tt d}(w,z) \leq A_1 {\tt d}(z,w).
\end{split}
\end{equation}
Let $d\lambda$ be the Leray--Levi measure on $bD$ (c.f. \cite[138]{LS}).
\smallskip
Then one has
$$   d\lambda(w)=\Lambda(w)d\sigma(w),
 $$
where  $d\sigma$ is the induced Lebesgue measure on $bD$ and $\Lambda(w)$ is a continuous function such that
$ c\leq \Lambda(w)\leq \tilde c, w\in bD$, with $c$ and $\tilde c$ two positive constants.
\color{black}

We also recall the  boundary balls $ B_r(w) $ determined via the quasidistance ${\tt d }$, i.e.,
\begin{align}\label{ball}
B_r(w) :=\{ z\in bD:\ {\tt d}(z,w)<r \}, \quad {\rm where\ } w\in bD.
\end{align}
According to \cite[p. 139]{LS}, we have
\begin{align}\label{lambdab}
c_\lambda^{-1} r^{2n}\leq \lambda\big(B_r(w) \big)\leq c_\lambda r^{2n},\quad 0<r\leq 1,
\end{align}
for some $c_\lambda>1$.

In \cite{LS}, the authors defined a family of Cauchy integrals $\{\EuScript C_\epsilon\}_\epsilon$ and studied their properties when $\epsilon$ is kept fixed. For convenience of notation we will henceforth drop explicit reference to $\epsilon$.  To study the Cauchy  transform $\EuScript C$,  which is the restriction of such a Cauchy integral on $bD$,
one of the key steps in  \cite{LS} is that they provided a constructive decomposition of $ \EuScript C$ as follows:
$$   \EuScript C =  \EuScript C^\sharp+ \EuScript R,  $$
where the essential part
\begin{align}
 \EuScript C^\sharp(f) (z) := \int_{w\in bD}  C^\sharp(w,z) f(w) d\lambda(w), \quad z\in bD
\end{align}
and the reminder
$$\EuScript R(f)(z):=\int_{w\in bD}R(w,z)f(w)d\lambda(w).$$
Thus, if we write
$$\EuScript C(f)(z):=\int_{w\in bD}C(w,z)f(w)d\lambda(w).$$
Then
$$C(w,z)=C^\sharp(w,z)+ R(w,z),$$
where the kernel $C^\sharp(w,z)$ satisfies the standard size and smoothness conditions
for Calder\'on--Zygmund operators, i.e. there exists a positive constant $A_2$ such that for every $w,z\in bD$ with $w\not=z$,
\begin{equation}\label{gwz}
\begin{split}
& |C^\sharp(w,z)|\leq A_2 {\displaystyle1\over\displaystyle{\tt d}(w,z)^{2n}};\\
&|C^\sharp(w,z) - C^\sharp(w,z')|\leq A_2  {\displaystyle {\tt d}(z,z')\over \displaystyle {\tt d}(w,z)^{2n+1} },\quad {\rm if}\ {\tt d}(w,z)\geq c {\tt d}(z,z');\\
&|C^\sharp(w,z) - C^\sharp(w',z)|\leq A_2 {\displaystyle {\tt d}(w,w')\over \displaystyle {\tt d}(w,z)^{2n+1} },\quad {\rm if}\ {\tt d}(w,z)\geq c{\tt d}(w,w')
 \end{split}
\end{equation}
for an appropriate constant $c>0$.
 However, the kernel $R(w,z)$ of $\EuScript R$ satisfies a size condition and a smoothness condition for only one of the variables as follows: there exists
a positive constant $C_R$ such that for every $w,z\in bD$ with $w\not=z$,
\begin{equation}\label{cr}
                \begin{split}
&|R(w,z)|\leq C_R {\displaystyle1\over \displaystyle{\tt d}(w,z)^{2n-1}};\\
& |R(w,z)-R(w,z')|\leq C_R {\displaystyle{\tt d}(z,z')\over\displaystyle {\tt d}(w,z)^{2n}},\quad {\rm if\ }  {\tt d}(w,z)\geq c_R {\tt d}(z,z')
                \end{split}
\end{equation}
for an appropriate large constant $c_R$.

We also denote by $BUC(bD)$ the space of all bounded uniformly continuous functions on $bD$.
We first point out that the Leray--Levi measure $d\lambda$ on $bD$ is a doubling measure, and  satisfies
the condition (1.1) in \cite{KL2}.
{
\begin{lemma}[\cite{DLLWW}]\label{measure lambda}
The Leray--Levi measure $d\lambda$ on $bD$ is doubling, i.e., there is a positive constant $C$ such that for all $x\in bD$ and $0<r\leq1$,
$$ 0<\lambda(B_{2r}(x))\leq C\lambda(B_{r}(x))<\infty. $$
Moreover, $\lambda$
satisfies the condition: there exist a constant $\epsilon_0\in(0,1)$ and a positive constant $C$ such that
$$ \lambda( B_r(x)\backslash B_r(y) ) +  \lambda( B_r(y)\backslash B_r(x) ) \leq C\bigg( { {\tt d}(x,y) \over r}  \bigg)^{\epsilon_0}   $$
for all $x,y\in bD$ and ${\tt d}(x,y)\leq r\leq1$.
\end{lemma}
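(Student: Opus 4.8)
The plan is to deduce both assertions from the ball--volume estimate \eqref{lambdab}, the quasi-metric axioms \eqref{metric d}, and the explicit description of the boundary balls $B_r(w)$ coming from the construction of ${\tt d}$ in \cite[Section 2.3]{LS}; this is precisely the corresponding lemma of \cite{DLLWW}, so I only sketch the argument.

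For the doubling property, fix $x\in bD$ and $0<r\le1$. If $2r\le1$, then applying \eqref{lambdab} to the two radii $r$ and $2r$ gives
\[
\lambda(B_{2r}(x))\le c_\lambda(2r)^{2n}=2^{2n}c_\lambda r^{2n}\le 2^{2n}c_\lambda^{2}\,\lambda(B_r(x)),
\]
together with $0<\lambda(B_r(x))\le\lambda(B_{2r}(x))<\infty$. If $1/2<r\le1$, then $B_{2r}(x)\subseteq bD$, so $\lambda(B_{2r}(x))\le\lambda(bD)\le\tilde c\,\sigma(bD)<\infty$ because $bD$ is a compact hypersurface of class $C^{2}$, while $\lambda(B_r(x))\ge\lambda(B_{1/2}(x))\ge c_\lambda^{-1}2^{-2n}>0$ by \eqref{lambdab}; hence $\lambda(B_{2r}(x))\le c_\lambda 2^{2n}\lambda(bD)\,\lambda(B_r(x))$. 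Taking $C:=\max\{2^{2n}c_\lambda^{2},\,c_\lambda 2^{2n}\lambda(bD)\}$ gives the doubling inequality for all $0<r\le1$.

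For the second condition, put $\delta:={\tt d}(x,y)$ and assume $\delta\le r\le1$. If $\delta\ge c_\ast r$ for a small fixed constant $c_\ast$, then \eqref{lambdab} yields
\[
\lambda\big(B_r(x)\setminus B_r(y)\big)+\lambda\big(B_r(y)\setminus B_r(x)\big)\le 2c_\lambda r^{2n}\le 2c_\lambda\le 2c_\lambda\,c_\ast^{-\epsilon_0}\,(\delta/r)^{\epsilon_0}
\]
for every $\epsilon_0\in(0,1)$, since $r\le1$ and $\delta/r\ge c_\ast$. It remains to treat $\delta<c_\ast r$. Here one uses that, by the construction in \cite[Section 2.3]{LS}, each ball $B_r(w)$ is comparable in Euclidean terms to an anisotropic box
\[
Q_r(w)=\big\{z\in bD:\ |z-w|\lesssim r,\ |\langle\eta_w,z-w\rangle|\lesssim r^{2}\big\},
\]
having $2n-2$ directions of length $\sim r$ and one ``bad'' direction $\eta_w$ of length $\sim r^{2}$ (the volume normalization \eqref{lambdab} forces exactly this shape), where $\eta_w$ is built from $\partial\rho(w)$ for a $C^{2}$ defining function $\rho$ and hence depends on $w$ in a $C^{1}$ manner; in particular $|\eta_x-\eta_y|\lesssim|x-y|\lesssim\delta$ and $|\langle\eta_x,x-y\rangle|\lesssim\delta^{2}$. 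Comparing $Q_r(x)$ with $Q_r(y)$, their symmetric difference is contained in the union of three Euclidean regions, each meeting the ball $\{|z-x|\lesssim r\}$: a spherical shell of thickness $\lesssim\delta$ about $\{|z-x|=r\}$ intersected with the slab of width $r^{2}$; a slab of width $\lesssim|\langle\eta_x,x-y\rangle|\lesssim\delta^{2}$ (the translation of the bad hyperplane); and a wedge of width $\lesssim|\eta_x-\eta_y|\,r\lesssim\delta r$ (its rotation). Estimating these in the $(2n-1)$-dimensional surface measure $d\sigma$, and using $c\le\Lambda\le\tilde c$ to pass back to $d\lambda$, bounds the symmetric difference by $\lesssim r^{2n-1}\delta+r^{2n-2}\delta^{2}\lesssim r^{2n-1}\delta$; since $r\le1$ this is $\lesssim\delta/r\le(\delta/r)^{\epsilon_0}$ for any $\epsilon_0\in(0,1]$. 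Fixing $\epsilon_0\in(0,1)$ and enlarging $C$ to absorb all constants completes the proof.

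I expect the delicate point to be the passage between $B_r(w)$ and the box $Q_r(w)$: they are only comparable up to fixed multiplicative constants, and on the bad direction these constants act on the scale $r^{2}$, so a careless comparison would replace the thin shell above by a fat annulus of measure $\approx r^{2n}$ and destroy the gain $(\delta/r)^{\epsilon_0}$. Handling this carefully is where one exploits the smallness of $c_\ast$, the normalization $r\le1$, and the freedom to take $\epsilon_0$ strictly below $1$; the full bookkeeping of the constants is carried out in \cite{DLLWW}, to which I refer for the remaining details.
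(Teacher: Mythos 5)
Your first half is fine: the doubling inequality does follow from \eqref{lambdab} exactly as you argue, with the trivial adjustment for $1/2<r\le 1$ (note the paper itself offers no argument here, quoting the lemma from \cite{DLLWW}, so your sketch is a reasonable reconstruction of that part), and the reduction of the second estimate to the regime ${\tt d}(x,y)<c_\ast r$ is also correct.

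The gap is in the heart of the second assertion, and it is precisely the point you flag and then defer: the passage from the balls $B_r(x),B_r(y)$ to the anisotropic boxes $Q_r(x),Q_r(y)$. The comparability coming from \cite[Section 2.3]{LS} is only of the form $Q_{c_1r}(w)\subset B_r(w)\subset Q_{c_2r}(w)$ with fixed $c_1<c_2$, and such two-sided inclusions give no control of $B_r(x)\setminus B_r(y)$ by $Q(x)\setminus Q(y)$: the unavoidable error term is a full corona $Q_{c_2r}(x)\setminus Q_{c_1r}(x)$ of measure $\approx r^{2n}$, which wipes out the gain $({\tt d}(x,y)/r)^{\epsilon_0}$. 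This is not ``bookkeeping of constants'' that can be delegated to \cite{DLLWW}; it is the entire substance of the estimate, and your three-region computation for the boxes does not survive it. The workable route is to stay with the quasidistance itself: since $bD\in C^2$, $\partial\rho$ is Lipschitz, and the explicit formula for ${\tt d}$ yields an additive perturbation bound of the type $|{\tt d}(z,x)^2-{\tt d}(z,y)^2|\lesssim {\tt d}(x,y)\big({\tt d}(x,y)+{\tt d}(z,x)\big)$; with $\delta={\tt d}(x,y)\le c_\ast r$ this places every point of $B_r(x)\setminus B_r(y)$ in the thin ${\tt d}$-annulus $\{z:\ r-C\delta\le {\tt d}(z,x)<r\}$ (and symmetrically with center $y$). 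One then needs a single-center annulus estimate, $\lambda\big(\{z:\ r-t\le {\tt d}(z,x)<r\}\big)\lesssim r^{2n-1}t$ (or any bound $\lesssim (t/r)^{\epsilon_0}$), and it is only at this stage — an annulus about one fixed center — that the anisotropic ``box'' geometry of the balls can be invoked consistently. Neither the perturbation estimate for ${\tt d}$ nor the thin-annulus bound appears in your proposal, so as written the second half of the lemma is not proved.
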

}

 We now recall the BMO space on $bD$.
 Consider $(bD, {\tt d}, d\lambda)$ as a space of homogeneous type with $bD$ compact.
Then ${\rm BMO }(bD)$ is defined as the set of all $b\in L^1(bD)$ such that
$$ \|b\|_*:=\sup_{ B\subset bD} {1\over\lambda(B)}\int_{B} |b(w)-b_B|d\lambda(w)<\infty, $$
where
\begin{align}\label{fb}
b_B={1\over \lambda(B)}\int_B b(z)d\lambda(z).
\end{align}
And the norm is defined as
$$\|b\|_{{\rm BMO }(bD)}:=\|b\|_*+ \|b\|_{L^1(bD)}. $$


The maximal function $Mf$ is defined as
$$Mf(z)=\sup_{z\in B\subset bD}{1\over\lambda(B)}\int_B |f(w)|d\lambda(w).$$
The sharp function $f^\#$ is defined as
$$f^\#(z)=\sup_{z\in B\subset bD}{1\over\lambda(B)}\int_B |f(w)-f_B|d\lambda(w),$$
where $f_B$ is defined in \eqref{fb}.

{Note that from  Lemma \ref{measure lambda}, $d\lambda$ is a doubling measure. Hence, we have the  following results.}

\begin{lemma}[\cite{Kok}]\label{lv}
Let $v\in A_p(bD)$, $p\geq 1$. Then there exist constants $C, \sigma>0$ such that for every ball $B$ and measurable subset $E\subset B$ the inequality
$$\frac{v(E)}{v(B)}\leq C\Big(\frac{\lambda(E)}{\lambda(B)} \Big)^\sigma$$
holds.
\end{lemma}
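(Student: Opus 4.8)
The plan is to deduce the estimate from the reverse Hölder inequality enjoyed by Muckenhoupt weights. First I would record that, by Lemma \ref{measure lambda}, $d\lambda$ is a doubling measure, so $(bD,{\tt d},d\lambda)$ is a space of homogeneous type; on such a space the classical self-improvement property of $A_p$ weights remains valid (this is precisely the generality in which the statement is quoted from \cite{Kok}). Concretely, since $v\in A_p(bD)$ with $p\ge1$, there exist $q>1$ and $C_0>0$, depending only on $p$, $[v]_{A_p(bD)}$ and the doubling constant of $\lambda$, such that
$$\left(\frac{1}{\lambda(B)}\int_B v(z)^q\,d\lambda(z)\right)^{1/q}\le \frac{C_0}{\lambda(B)}\int_B v(z)\,d\lambda(z)$$
for every ball $B\subset bD$. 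I would either cite this directly from \cite{Kok} or, if a self-contained argument is wanted, derive it from the $A_\infty$ property of $v$ via the usual Calderón--Zygmund/Gehring-type iteration, which goes through verbatim because $\lambda$ is doubling.

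The remaining step is a single application of Hölder's inequality. Given a ball $B$ and a measurable set $E\subset B$, apply Hölder with conjugate exponents $q$ and $q'=q/(q-1)$:
$$v(E)=\int_E v\,d\lambda \le \left(\int_E v^q\,d\lambda\right)^{1/q}\lambda(E)^{1/q'}\le \left(\int_B v^q\,d\lambda\right)^{1/q}\lambda(E)^{1/q'}.$$
Bounding the first factor by the reverse Hölder inequality above and using $1/q-1=-1/q'$ gives
$$v(E)\le C_0\,\lambda(B)^{1/q-1}\Big(\int_B v\,d\lambda\Big)\lambda(E)^{1/q'}=C_0\,v(B)\left(\frac{\lambda(E)}{\lambda(B)}\right)^{1/q'}.$$
Taking $C=C_0$ and $\sigma=1/q'=(q-1)/q\in(0,1)$ yields the asserted inequality.

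The only substantive ingredient is the reverse Hölder inequality itself, and the sole point requiring care is that we work on the boundary $bD$ rather than on Euclidean space; but the doubling property of $d\lambda$ from Lemma \ref{measure lambda} is exactly what is needed to run the standard proof (or to invoke \cite{Kok}), so this is not a genuine obstacle. Everything beyond it is elementary.
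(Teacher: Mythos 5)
Your argument is correct, and it is the standard proof of this fact; note that the paper itself offers no proof of Lemma \ref{lv} but simply quotes it from \cite{Kok}, so you are reconstructing the argument of the cited source rather than diverging from anything in the paper. The only point worth flagging is the reverse H\"older step: on a general space of homogeneous type the self-improvement of $A_p$ weights is usually available only in the \emph{weak} form, namely
$\Bigl(\frac{1}{\lambda(B)}\int_B v^q\,d\lambda\Bigr)^{1/q}\leq \frac{C_0}{\lambda(2B)}\int_{2B} v\,d\lambda$,
with the dilated ball $2B$ on the right (the strong form with $B$ on both sides can fail without extra hypotheses on the measure). This costs nothing here: since $v\in A_p(bD)$ and $\lambda$ is doubling (Lemma \ref{measure lambda}), the measure $v\,d\lambda$ is itself doubling, so $v(2B)\leq C\,v(B)$ and $\lambda(2B)\geq\lambda(B)$, and your H\"older computation goes through verbatim with the same exponent $\sigma=1/q'$. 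With that one-line adjustment (or by assuming enough regularity of $\lambda$ to get the strong reverse H\"older inequality, as in \cite{Kok}), the proof is complete.
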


\begin{lemma}[\cite{MS}]\label{maximal}
Let $v\in A_p(bD)$, $1< p<\infty$. There exists a constant $C$ such that for every $f\in L_v^p(bD)$,
\begin{align*}
\left\|Mf\right\|_{L_v^p(bD)}\leq C\|f\|_{L_v^p(bD)},
\end{align*}
where $\|f\|_{L_v^p(bD)}^{p}=\int_{bD} |f(z)|^pv(z)d\lambda(z)$.
\end{lemma}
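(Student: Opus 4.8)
This is the weighted maximal function inequality of Muckenhoupt, now in the setting of the space of homogeneous type $(bD,{\tt d},d\lambda)$, the measure $d\lambda$ being doubling by Lemma \ref{measure lambda}. The plan is the classical three-step argument: (i) a weak-type $(s,s)$ bound for $M$ with respect to the measure $v\,d\lambda$ whenever $v\in A_s(bD)$, $1\le s<\infty$; (ii) the self-improvement (openness) of the $A_p$ classes, $A_p(bD)\subset A_{p-\varepsilon}(bD)$ for some $\varepsilon>0$; (iii) Marcinkiewicz interpolation between the weak-$(p-\varepsilon,p-\varepsilon)$ bound so obtained and the trivial $L^\infty$ bound.

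\emph{Step (i).} Fix $t>0$ and put $\Omega_t=\{z\in bD:\ Mf(z)>t\}$. Each $z\in\Omega_t$ lies in some ball $B\ni z$ with $\lambda(B)^{-1}\int_B|f|\,d\lambda>t$; since ${\tt d}$ obeys the quasi-triangle inequality \eqref{metric d} and $\lambda$ is doubling, a Vitali-type covering lemma on spaces of homogeneous type (\cite{MS}) yields a countable disjoint subfamily $\{B_i\}$ of such balls whose fixed ${\tt d}$-dilates cover $\Omega_t$. As $A_s$ weights are doubling, $v(\Omega_t)\le C\sum_i v(B_i)$. For each $i$, H\"older's inequality with exponents $s$ and $s'=s/(s-1)$ gives
\[
\int_{B_i}|f|\,d\lambda\le\Big(\int_{B_i}|f|^s v\,d\lambda\Big)^{1/s}\Big(\int_{B_i}v^{-1/(s-1)}\,d\lambda\Big)^{(s-1)/s},
\]
and the $A_s$ condition bounds $\int_{B_i}v^{-1/(s-1)}\,d\lambda$ above by $[v]_{A_s}^{1/(s-1)}\lambda(B_i)^{s/(s-1)}v(B_i)^{-1/(s-1)}$; inserting this together with $\lambda(B_i)<t^{-1}\int_{B_i}|f|\,d\lambda$ and simplifying yields $v(B_i)\le[v]_{A_s}\,t^{-s}\int_{B_i}|f|^s v\,d\lambda$. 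Summing over the disjoint family gives $v(\Omega_t)\le C[v]_{A_s}\,t^{-s}\|f\|_{L_v^s(bD)}^s$, so $M$ is of weak type $(s,s)$ with respect to $v\,d\lambda$.

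\emph{Steps (ii)--(iii).} The reverse H\"older inequality for $A_\infty$ weights on $(bD,{\tt d},d\lambda)$ --- available from the same Calder\'on--Zygmund/Vitali machinery together with Lemma \ref{measure lambda} (\cite{MS}) --- shows that $v\in A_p(bD)$ belongs to $A_{p-\varepsilon}(bD)$ for some $\varepsilon\in(0,p-1)$, so by Step (i) the operator $M$ is of weak type $(p-\varepsilon,p-\varepsilon)$ with respect to $v\,d\lambda$. On the other hand $\|Mf\|_{L^\infty(bD)}\le\|f\|_{L^\infty(bD)}$ trivially, and since $0<v<\infty$ $\lambda$-a.e. with $v\in L^1(bD)$ one has $L^\infty(v\,d\lambda)=L^\infty(d\lambda)$ with equal norms; thus $M$ maps $L^\infty(v\,d\lambda)$ boundedly into itself. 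Since $M$ is sublinear, the Marcinkiewicz interpolation theorem over the measure space $(bD,v\,d\lambda)$, applied between the weak-$(p-\varepsilon,p-\varepsilon)$ endpoint and the $(\infty,\infty)$ endpoint, shows that $M$ is bounded on $L^q(v\,d\lambda)$ for every $q\in(p-\varepsilon,\infty)$, in particular for $q=p$, which is the asserted estimate.

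The only genuine point requiring attention is that the Euclidean tools used above --- the covering lemma, the doubling property of $A_\infty$ weights, and the reverse H\"older inequality --- must be taken in their forms valid for a quasimetric, rather than metric, space: the quasi-triangle constant $C_d$ and the doubling constant of $\lambda$ enter all quantitative estimates as well as the radii of the dilated balls in the covering lemma. Compactness of $bD$ removes any difficulty at ``infinity'' (all balls have radius bounded by a fixed multiple of $1$, so \eqref{lambdab} and Lemma \ref{measure lambda} apply uniformly). All of these facts are established in \cite{MS} in precisely this generality, and the lemma follows.
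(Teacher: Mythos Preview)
The paper does not prove this lemma at all; it is simply stated with the citation \cite{MS} and used later as a black box. Your proposal supplies the standard Muckenhoupt argument---weak-type $(s,s)$ for $M$ on $L^s(v\,d\lambda)$ via a Vitali covering and the $A_s$ condition, openness of $A_p$ from the reverse H\"older inequality, and Marcinkiewicz interpolation against the trivial $L^\infty$ bound---carried out on the space of homogeneous type $(bD,{\tt d},d\lambda)$. The argument is correct; the only cosmetic point is that the H\"older step in Step~(i) as written requires $s>1$ (the exponent $s'=s/(s-1)$), but since Steps~(ii)--(iii) apply it only at $s=p-\varepsilon>1$ this is harmless.
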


\begin{lemma}[\cite{PS}]\label{lemma-sharp}
Let $v\in A_p(bD)$, $1< p<\infty$. There exists a constant $C$ such that if $\|f\|_{L_v^p(bD)}<\infty$, then
\begin{align*}
\left\|f\right\|_{L_v^p(bD)}^{p}\leq C\Big(v(bD)(f_{bD})^p+\|f^\#\|_{L_v^p(bD)}^{p}\Big).
\end{align*}
\end{lemma}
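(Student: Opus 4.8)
The plan is to adapt the Fefferman--Stein sharp maximal function estimate to the \emph{compact} space of homogeneous type $(bD,{\tt d},\lambda)$; the term $v(bD)(f_{bD})^p$ is exactly the price paid for $bD$ having finite measure, since there is no way to let balls grow arbitrarily large. First I would fix a system of dyadic cubes on $(bD,{\tt d},\lambda)$ (e.g. the cubes of Christ or of Hyt\"onen--Kairema), normalised so that the cubes of the top generation $Q^0_1,\dots,Q^0_N$ have diameter comparable to $\diam(bD)$ and cover $bD$, and let $M^{d}f$ denote the maximal function taken over all dyadic cubes. Since $\lambda$ is doubling by Lemma \ref{measure lambda}, the Lebesgue differentiation theorem holds on $(bD,{\tt d},\lambda)$, so $|f(z)|\le M^{d}f(z)$ for $\lambda$-a.e.\ $z\in bD$; hence it suffices to bound $\|M^{d}f\|_{L_v^{p}(bD)}$.

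The core is a weighted good-$\lambda$ inequality of Fefferman--Stein type. Working inside one top cube $Q^0_j$ and fixing $t>c_0\,|(f)_{Q^0_j}|$ for a suitable structural constant $c_0$, one decomposes the open set $\{z\in Q^0_j:\ M^{d}f(z)>t\}$ into its maximal dyadic subcubes $\{Q_k\}$ (these exist precisely because $t$ exceeds the average over the top cube, so no $Q^0_j$ is itself selected). On each $Q_k$, comparing $f$ with its average over the non-stopped dyadic parent of $Q_k$, a standard argument gives $\lambda\big(\{z\in Q_k:\ M^{d}f(z)>c_0t,\ f^{\#}(z)\le\gamma t\}\big)\le C\gamma\,\lambda(Q_k)$ for every $\gamma>0$. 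Because dyadic cubes are sandwiched between balls and $v\in A_p\subset A_\infty$, Lemma \ref{lv} upgrades this to $v\big(\{z\in Q_k:\ M^{d}f(z)>c_0t,\ f^{\#}(z)\le\gamma t\}\big)\le C\gamma^{\sigma}\,v(Q_k)$ with the exponent $\sigma>0$ from that lemma; summing over $k$ and $j$ yields, for all $t>t_0:=c_0\max_j|(f)_{Q^0_j}|$,
\[
v\big(\{M^{d}f>c_0t,\ f^{\#}\le\gamma t\}\big)\le C\gamma^{\sigma}\,v\big(\{M^{d}f>t\}\big).
\]

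Finally I would integrate. Writing $\|M^{d}f\|_{L_v^{p}(bD)}^{p}=p\int_0^\infty t^{p-1}v(\{M^{d}f>t\})\,dt$ and splitting at $c_0 t_0$, the low part is at most $(c_0 t_0)^{p}\,v(bD)\le C\,v(bD)\max_j|(f)_{Q^0_j}|^{p}$; since each $Q^0_j$ has measure comparable to $\lambda(bD)$, one has $|(f)_{Q^0_j}-f_{bD}|\le C\inf_{bD}f^{\#}$, and combined with $\big(\inf_{bD}f^{\#}\big)^{p}\le v(bD)^{-1}\|f^{\#}\|_{L_v^{p}(bD)}^{p}$ this gives $v(bD)\max_j|(f)_{Q^0_j}|^{p}\le C\big(v(bD)(f_{bD})^{p}+\|f^{\#}\|_{L_v^{p}(bD)}^{p}\big)$. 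For the high part, the weighted good-$\lambda$ inequality gives in the usual way
\[
\int_{c_0 t_0}^{\infty}t^{p-1}v(\{M^{d}f>c_0t\})\,dt\le C\gamma^{\sigma}c_0^{p}\int_{0}^{\infty}t^{p-1}v(\{M^{d}f>t\})\,dt+C\,\|f^{\#}\|_{L_v^{p}(bD)}^{p},
\]
and choosing $\gamma$ so small that $C\gamma^{\sigma}c_0^{p}<\tfrac12$ lets one absorb the first term on the right (legitimately, since $\|f\|_{L_v^p(bD)}<\infty$ makes everything finite). Combining the two pieces with $|f|\le M^{d}f$ gives the asserted inequality. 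The main obstacle is precisely the bookkeeping forced by compactness: the Calder\'on--Zygmund stopping time can only be run above the threshold $t_0$ set by the averages over the finitely many top cubes, and one must then show that the leftover low-level mass is genuinely of the form $v(bD)(f_{bD})^p$ modulo a term absorbed by $\|f^{\#}\|_{L_v^p(bD)}^p$, while keeping all constants uniform over the (geometry-dependent) top cubes and pushing the unweighted good-$\lambda$ estimate through the $A_\infty$ bound of Lemma \ref{lv}.
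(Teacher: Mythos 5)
The paper itself gives no proof of this lemma---it is simply quoted from Pradolini--Salinas \cite{PS}---and your argument (dyadic cubes on the compact space of homogeneous type $(bD,{\tt d},\lambda)$, the local Calder\'on--Zygmund stopping time above the top-cube threshold, the good-$\lambda$ inequality between $M^{d}f$ and $f^{\#}$ upgraded to the weight via Lemma \ref{lv}, and absorption using $\|f\|_{L_v^p(bD)}<\infty$) is precisely the standard Fefferman--Stein route with the finite-measure correction term, and it is correct in substance. Two cosmetic repairs: the stopping time and threshold must be phrased with averages of $|f|$ rather than of $f$ (i.e. $t_0\simeq\max_j\frac{1}{\lambda(Q^0_j)}\int_{Q^0_j}|f|\,d\lambda$, which is still $\lesssim |f_{bD}|+\inf_{bD}f^{\#}$ since $bD$ is itself a ball), and the finiteness needed for the absorption step should be justified by $\|M^{d}f\|_{L_v^p(bD)}\lesssim\|Mf\|_{L_v^p(bD)}\lesssim\|f\|_{L_v^p(bD)}<\infty$ via Lemma \ref{maximal}.
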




\begin{lemma}[\cite{Kr}]\label{lem-jn1}
If $f\in {\rm BMO}(bD)$, then there exist positive constants $C_1$ and $C_2$ such that for every ball $B\subset bD$ and every $\alpha>0$, we have
$$\lambda(\{x\in B: |f(x)-f_B|>\alpha \})\leq C_1\lambda(B)\exp\Big\{- {C_2\over \|f\|_{{\rm BMO}(bD)}}\alpha\Big\}.$$
\end{lemma}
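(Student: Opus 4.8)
The plan is to prove this as the John--Nirenberg inequality on the space of homogeneous type $(bD,{\tt d},\lambda)$ by the usual Calder\'on--Zygmund stopping-time argument, the only preparatory work being to install on $bD$ the standard machinery that makes that argument run. Since $\|f\|_*\le\|f\|_{{\rm BMO}(bD)}$ and $t\mapsto e^{-t}$ is decreasing, it suffices to prove the bound with $\|f\|_*$ in the exponent, and after replacing $f$ by $f/\|f\|_*$ (the case $\|f\|_*=0$ being trivial) we may assume $\|f\|_*=1$. Because $\lambda$ is doubling by Lemma \ref{measure lambda}, $(bD,{\tt d},\lambda)$ admits a dyadic cube system $\mathcal D=\bigcup_k\mathcal D_k$ in the sense of M.\ Christ: the cubes of a fixed generation partition $bD$ up to a null set, cubes are nested across generations, and each $Q\in\mathcal D$ sits between two ${\tt d}$-balls of comparable radius, so that $\lambda(\widehat Q)\le C_\lambda\lambda(Q)$ for the dyadic parent $\widehat Q$. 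Since every ${\tt d}$-ball $B$ is contained in a dyadic cube $Q$ with $\lambda(Q)\le C\lambda(B)$ and then $|f_B-f_Q|\le C\|f\|_*$, the statement for balls will follow from the statement for dyadic cubes after shifting $\alpha$ by a multiple of $\|f\|_*$ and adjusting $C_1$. We therefore aim to show $\lambda(\{x\in Q_0:|f(x)-f_{Q_0}|>\alpha\})\le C_1\lambda(Q_0)e^{-C_2\alpha}$ for every $Q_0\in\mathcal D$; note that $\frac1{\lambda(Q)}\int_Q|f-f_Q|\,d\lambda\le A$ for all $Q\in\mathcal D$, with $A$ depending only on ${\tt d}$ and the doubling constant, because each $Q$ lies inside a ball of comparable measure.

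Fix $Q_0\in\mathcal D$, set $h=f-f_{Q_0}$, and fix a universal threshold $\beta\ge 2A$. Let $\{Q_j\}$ be the maximal dyadic subcubes of $Q_0$ with $\frac1{\lambda(Q_j)}\int_{Q_j}|h|\,d\lambda>\beta$; they are pairwise disjoint, and by the Lebesgue differentiation theorem (valid since $\lambda$ is doubling) $|h|\le\beta$ $\lambda$-a.e.\ on $Q_0\setminus\bigcup_jQ_j$. Maximality applied to the parent $\widehat{Q_j}$ gives $\frac1{\lambda(Q_j)}\int_{Q_j}|h|\,d\lambda\le C_\lambda\beta$, hence $|f_{Q_j}-f_{Q_0}|\le C_\lambda\beta$; and since $\int_{Q_j}|h|\,d\lambda>\beta\lambda(Q_j)$ and the $Q_j$ are disjoint inside $Q_0$ we obtain the packing bound $\sum_j\lambda(Q_j)\le\frac1\beta\int_{Q_0}|h|\,d\lambda\le\frac A\beta\lambda(Q_0)\le\tfrac12\lambda(Q_0)$.

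Now iterate. Put $N(\alpha)=\sup_{Q\in\mathcal D}\frac1{\lambda(Q)}\lambda(\{x\in Q:|f(x)-f_Q|>\alpha\})\le 1$. For $\alpha>C_\lambda\beta$ (so in particular $\alpha>\beta$) the set $\{x\in Q_0:|f-f_{Q_0}|>\alpha\}$ is, up to a null set, contained in $\bigcup_jQ_j$, and on each $Q_j$ it is contained in $\{x\in Q_j:|f-f_{Q_j}|>\alpha-C_\lambda\beta\}$; summing and using the packing bound gives $\lambda(\{x\in Q_0:|f-f_{Q_0}|>\alpha\})\le\tfrac12 N(\alpha-C_\lambda\beta)\lambda(Q_0)$. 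Taking the supremum over $Q_0$ yields $N(\alpha)\le\tfrac12 N(\alpha-C_\lambda\beta)$ for $\alpha>C_\lambda\beta$, which together with $N\le1$ forces $N(\alpha)\le 2\,e^{-C_2\alpha}$ with $C_2=(\ln 2)/(C_\lambda\beta)$ a universal constant. Undoing the normalization reinstates the factor $\|f\|_*^{-1}$ in the exponent, which we weaken to $\|f\|_{{\rm BMO}(bD)}^{-1}$, and passing back from dyadic cubes to balls as above produces the claimed inequality with suitable $C_1,C_2>0$.

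The substantive ingredients are thus the geometric ones supplied by Lemma \ref{measure lambda}: Christ's dyadic cube system and the Lebesgue differentiation theorem on $(bD,{\tt d},\lambda)$, together with the elementary but slightly fussy comparison between ${\tt d}$-balls and dyadic cubes used to transfer the conclusion (and to compare the cube-averaged oscillation with $\|f\|_*$, which is where the quasi-triangle-inequality constants of ${\tt d}$ enter). With that in hand the stopping-time decomposition and the exponential self-improvement are verbatim the classical Euclidean argument, so this is the step I expect to require the most care rather than the most ideas.
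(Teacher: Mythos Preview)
The paper does not prove this lemma at all; it is quoted from \cite{Kr} without argument. Your proposal supplies the standard John--Nirenberg proof via a Calder\'on--Zygmund stopping-time on Christ cubes, and the core iteration $N(\alpha)\le\tfrac12 N(\alpha-C_\lambda\beta)$ is carried out correctly.

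One technical point to tighten: the claim that every ${\tt d}$-ball $B$ sits inside a \emph{single} Christ cube $Q$ with $\lambda(Q)\le C\lambda(B)$ is not true in general for one dyadic grid (a ball can straddle a cube boundary at every scale). You flagged this step as ``fussy'' yourself; to make it rigorous either invoke a finite family of adjacent dyadic systems (Hyt\"onen--Kairema) so that every ball is contained in a cube of comparable measure from \emph{some} grid, or cover $B$ by the boundedly many generation-$k$ cubes that meet it and sum the dyadic estimate over those. Either fix is routine and the rest of your argument then goes through unchanged.
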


According to \cite[Theorem 5.5]{HT}, we have the following result for BMO functions on $bD$.

\begin{lemma}\label{bmoqq}
Let $0<p<\infty$, $v\in A_\infty(bD)$, $f\in {\rm{BMO}}(bD)$. Then
$$\|f\|_{{\rm{BMO}}(bD)}\approx\sup_{B\subset bD}\bigg\{{1\over v(B)}\int_B\big| f(z)-f_{B,v} \big|^pv(z)d\lambda(z)  \bigg\}^{1\over p},$$
where $f_{B,v}={1\over v(B)}\int_{B}f(z)v(z)d\lambda(z).$
\end{lemma}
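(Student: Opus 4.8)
The plan is to deduce the weighted $L^p$-John--Nirenberg type equivalence from the unweighted one (Lemma \ref{lem-jn1}) together with the reverse H\"older / comparability properties of $A_\infty$ weights, following the standard scheme that in the Euclidean setting is attributed to Mucken\-houpt--Wheeden. The statement to prove is a two-sided estimate, so I would treat the two inequalities separately.

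For the easier direction, namely the lower bound
\[
\sup_{B\subset bD}\Big\{\tfrac{1}{v(B)}\int_B |f(z)-f_{B,v}|^p v(z)\,d\lambda(z)\Big\}^{1/p}\lesssim \|f\|_{{\rm BMO}(bD)},
\]
I would fix a ball $B$, replace $f_{B,v}$ by the unweighted mean $f_B$ at the cost of an additive error $|f_B-f_{B,v}|$, and then estimate $\tfrac{1}{v(B)}\int_B|f-f_B|^p v\,d\lambda$ via the distributional inequality of Lemma \ref{lem-jn1}: writing $v(\{z\in B:|f(z)-f_B|>\alpha\})\le v(E_\alpha)$ and controlling $v(E_\alpha)/v(B)$ by $C(\lambda(E_\alpha)/\lambda(B))^{\sigma}$ using Lemma \ref{lv}, the exponential decay in $\alpha$ from Lemma \ref{lem-jn1} is raised to the power $\sigma\in(0,1)$ but remains exponential, so $\int_0^\infty \alpha^{p-1}\,\tfrac{v(E_\alpha)}{v(B)}\,d\alpha$ converges and is bounded by a constant times $\|f\|_{{\rm BMO}(bD)}^p$. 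The leftover term $|f_B-f_{B,v}|$ is itself $\lesssim\|f\|_{{\rm BMO}(bD)}$ since $f_{B,v}$ is an average of $f$ against the probability measure $v\,d\lambda/v(B)$ on $B$, and $\tfrac{1}{v(B)}\int_B|f-f_B|v\,d\lambda$ is controlled by the same Lemma \ref{lv}+Lemma \ref{lem-jn1} argument with $p=1$.

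For the reverse inequality, the point is that $f_{B,v}$ minimises $c\mapsto \tfrac1{v(B)}\int_B|f-c|^p v\,d\lambda$ up to a factor depending only on $p$, so it suffices to bound $\tfrac{1}{\lambda(B)}\int_B|f-f_B|\,d\lambda$ by a constant times the right-hand side of the claimed equivalence with $f_{B,v}$ replaced by $f_B$; one then uses H\"older's inequality with exponent depending on $p$ together with the $A_\infty$ reverse-H\"older property of $v$ (a consequence of the hypothesis $v\in A_\infty(bD)$ and Lemma \ref{measure lambda}) to pass from $d\lambda$ to $v\,d\lambda$: $\tfrac1{\lambda(B)}\int_B|f-f_B|\,d\lambda \le \big(\tfrac1{\lambda(B)}\int_B v^{-s}\,d\lambda\big)^{1/(ps')}\,\cdots$, absorbing the powers of $v$ via reverse H\"older. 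I would then supremise over $B$. The main obstacle is bookkeeping rather than conceptual: one must keep careful track of the constants and exponents $\sigma$, of the reverse-H\"older exponent of $v$, and of the interplay between $f_B$ and $f_{B,v}$; but since Lemma \ref{lem-jn1}, Lemma \ref{lv} and the doubling property in Lemma \ref{measure lambda} are already available on the space of homogeneous type $(bD,{\tt d},d\lambda)$, the argument goes through verbatim as in \cite[Theorem 5.5]{HT}, which is why I would ultimately simply invoke that reference for the detailed computation.
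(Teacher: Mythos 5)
The paper itself offers no argument for Lemma \ref{bmoqq}: it is quoted directly from \cite[Theorem 5.5]{HT}, so your final fallback of invoking that reference is exactly what the paper does. Your sketch of the easy direction is also correct and standard: layer-cake plus Lemma \ref{lem-jn1}, transferring from $\lambda$ to $v\,d\lambda$ through Lemma \ref{lv} (the exponential decay survives being raised to the power $\sigma$), and the error term $|f_B-f_{B,v}|$ absorbed by the same estimate with $p=1$.

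The sketched converse, however, has a genuine gap. The reverse H\"older inequality for $v\in A_\infty$ controls averages of $v^{1+\epsilon}$, i.e.\ positive powers of $v$, whereas the H\"older splitting you write down requires uniform control of $\frac{1}{\lambda(B)}\int_B v^{-s}\,d\lambda$, i.e.\ negative powers; that control is not a consequence of reverse H\"older but of the fact that $v\in A_q$ for some finite $q$ (giving $s=1/(q-1)$). More importantly, the H\"older exponent is then forced to be at least this $q$, which bears no relation to the given $p$ (which may even satisfy $p<1$, where the splitting cannot be performed at all): the argument as written bounds $\frac{1}{\lambda(B)}\int_B|f-f_{B,v}|\,d\lambda$ by the \emph{weighted $L^{q}$-oscillation}, not by the weighted $L^{p}$-oscillation appearing in the hypothesis. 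To close this one needs a self-improvement step: since $v\,d\lambda$ is doubling (Lemma \ref{measure lambda} together with $v\in A_\infty$), a John--Nirenberg inequality for BMO with respect to the measure $v\,d\lambda$ (or a John--Str\"omberg/median argument when $p<1$) upgrades the weighted $L^{p}$-oscillation bound to exponential decay of $v(\{z\in B:|f-f_{B,v}|>\alpha\})/v(B)$, which is then transferred to $\lambda$ via the $A_\infty$ comparability $\lambda(E)/\lambda(B)\lesssim (v(E)/v(B))^{\delta}$ and integrated; this is essentially the content of \cite[Theorem 5.5]{HT}. A minor point inherited from the paper's statement rather than from your proposal: the equivalence can only hold for the seminorm $\|\cdot\|_*$, since constants make the right-hand side vanish while $\|f\|_{{\rm BMO}(bD)}$ includes $\|f\|_{L^1(bD)}$.
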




\subsection{Characterisation of ${\rm BMO}(bD)$ via the Commutator $[b, \EuScript C]$ }

\begin{proof}[Proof of necessity of  Theorem \ref{cauchy}]

\smallskip

We first prove necessity, namely that $b\in {\rm BMO}(bD)$ implies the boundedness of $[b, \EuScript C]$.

We can write
$$[b,\EuScript C]=[b,\EuScript C^\sharp]+[b,\EuScript R].$$

Since the kernel of $\EuScript C^\sharp$ is a standard kernel on $bD\times bD$, according to \cite[Theorerm 3.4]{KS},  we can obtain that $[b, \EuScript C^\sharp]$ is bounded on  $L_v^{p,\kappa}(bD)$ and
\begin{align*}
\|[b,\EuScript C^\sharp]\|_{L_v^{p,\kappa}(bD)\rightarrow L_v^{p,\kappa}(bD)}\lesssim \|b\|_{{\rm BMO} (bD)}.
\end{align*}
Thus, it suffices to show that
\begin{align}\label{[b,R] bounded}
\|[b,\EuScript R]\|_{L_v^{p,\kappa}(bD)\rightarrow L_v^{p,\kappa}(bD)}\lesssim \|b\|_{{\rm BMO} (bD)}.
\end{align}

Now fix a ball $B=B_r(z_0)\subset bD$ and decompose $f=f\chi_{bD\cap 2B}+f\chi_{bD\setminus 2B}=:f_1+f_2$. Then
\begin{align*}
&{1\over v(B)^{\kappa}}\int_B\left|[b,\EuScript R] f(z)  \right|^pv(z)d\lambda(z)\\
&\lesssim \bigg\{ {1\over v(B)^{\kappa}}\int_B\left|[b,\EuScript R] f_1(z)  \right|^pv(z)d\lambda(z)+{1\over v(B)^{\kappa}}\int_B\left|[b,\EuScript R] f_2(z)  \right|^pv(z)d\lambda(z)\bigg\}\\
&=:I+II.
\end{align*}

For the term $I$, by the proof of Theorem 1.6 in \cite{DLLW}, we find
\begin{align*}
{1\over v(B)^{\kappa}}\int_B\left|[b,\EuScript R] f_1(z)  \right|^pv(z)d\lambda(z)
&\leq{1\over v(B)^{\kappa}} \int_{bD}\left|[b,\EuScript R] f_1(z)\right|^pv(z)d\lambda(z)\\
&\lesssim \|b\|^p_{ {\rm BMO}(bD) }{1\over v(B)^{\kappa}}\int_{2B}|f(z)|^pv(z)d\lambda(z)\\
&\lesssim\|b\|^p_{ {\rm BMO}(bD) }\|f\|^p_{L_v^{p,\,\kappa}(bD)}.
\end{align*}

For the term $II$, observe that for $z\in B$, by \eqref {cr}, we have
\begin{align*}
\left|[b,\EuScript R] f_2(z)\right|^p
&\leq\bigg(\int_ {bD}|b(z)-b(w)| |R(w,z)| |f_2(w)|d\lambda(w)\bigg)^p\\
&\lesssim \bigg( \int_{ bD\setminus 2B}{|b(z)-b(w)|\over{\tt d}(w, z)^{2n-1}} |f(w)|d\lambda(w)\bigg)^p\\
&\lesssim \bigg(\int_{ bD\setminus 2B}{ |f(w)|\over{\tt d}(w, z_0)^{2n-1}}\left\{\left|b(z)-b_{B,v}\right|+\left|b_{B,v}-b(w)\right|\right\}d\lambda(w)\bigg)^p\\
&\lesssim\bigg(\int_{ bD\setminus 2B}{ |f(w)|\over{\tt d}(w, z_0)^{2n-1}}d\lambda(w)  \bigg)^p \left|b(w)-b_{B,v}\right|^p\\
&\quad+\bigg(\int_{ bD\setminus 2B}{ |f(w)|\over{\tt d}(w, z_0)^{2n-1}}\left|b_{B,v}-b(w)\right|d\lambda(w) \bigg)^p,
\end{align*}
where
$b_{B,v}={1\over v(B)}\int_{B}b(z)v(z)d\lambda(z).$ Then by using that $bD$ is bounded we can obtain
\begin{align*}
II&={1\over v(B)^\kappa}\int_B\left|[b,\EuScript R] f_2(z)\right|^pv(z)d\lambda(z)\\
&\lesssim
{1\over v(B)^\kappa} \bigg(\int_{bD\setminus 2B}{ |f(w)|\over{\tt d}(w, z_0)^{2n-1}}d\lambda(w) \bigg)^p\int_B\left|b(z)-b_{B,v}\right|^pv(z)d\lambda(z)  \\
&\quad+\bigg(\int_{ bD\setminus 2B}{ |f(w)|\over{\tt d}(w, z_0)^{2n-1}}\left|b_{B,v}-b(w)\right|d\lambda(w) \bigg)^pv(B)^{1-\kappa}\\
&=:{II}_1+{II}_2.
\end{align*}

For ${II}_1$, by the H\"older inequality, Lemma \ref{bmoqq} and Lemma \ref{lv}, we have
\begin{align*}
II_1&\lesssim \|f\|^p_{L_{v}^{p,\kappa}(bD)}{1\over v(B)^\kappa}\bigg(\sum_{j=1}^{\infty} {1\over v(2^{j+1}B)^{{1-\kappa\over p}}} \bigg)^p\int_B\left|b(z)-b_{B,v}\right|^pv(z)d\lambda(z)\\
&\lesssim\|b\|_{{\rm BMO}(bD)}\|f\|^p_{L_{v}^{p,\kappa}(bD)}\bigg(\sum_{j=1}^{\infty} {v(B)^{1-\kappa\over p}\over v(2^{j+1}B)^{{1-\kappa\over p}}} \bigg)^p\\
&\lesssim\|b\|_{{\rm BMO}(bD)}\|f\|^p_{L_{v}^{p,\kappa}(bD)}.
\end{align*}

For ${II}_2$, by the H\"older inequality, we have
\begin{align*}
II_2&\lesssim v(B)^{1-\kappa}\bigg(\sum_{j=1}^{\infty}{1\over \lambda(2^jB)}\int_{2^{j+1}B}|f(w)| \left| b(w)-b_{B,v}\right| d\lambda(w) \bigg)^p \\
&\lesssim v(B)^{1-\kappa}\bigg\{\sum_{j=1}^{\infty}{1\over \lambda(2^jB)}\left(\int_{2^{j+1}B}|f(w)|^p v(w)d\lambda(w)\right)^{1\over p}\\
&\qquad \times\left(\int_{2^{j+1}B} \left| b(w)-b_{B,v}\right| ^{p'}v(w)^{1-p'}d\lambda(w)\right)^{1\over p'} \bigg\}^p\\
 &\lesssim v(B)^{1-\kappa}\|f\|^p_{L^{p,\kappa}(bD)}\bigg\{\sum_{j=1}^{\infty}{v(2^{j+1}B)^{\kappa\over p}\over \lambda(2^jB)}
\left(\int_{2^{j+1}B} \left| b(w)-b_{B,v}\right| ^{p'}v(w)^{1-p'}d\lambda(w)\right)^{1\over p'} \bigg\}^p\\
 &\lesssim v(B)^{1-\kappa}\|f\|^p_{L^{p,\kappa}(bD)}\bigg\{\sum_{j=1}^{\infty}{v(2^{j+1}B)^{\kappa\over p}\over \lambda(2^jB)}
\bigg[\left(\int_{2^{j+1}B} \left| b(w)-b_{2^{j+1}B,v^{1-p'}}\right| ^{p'}v(w)^{1-p'}d\lambda(w)\right)^{1\over p'}\\
&\qquad+\left(\int_{2^{j+1}B} \left| b_{2^{j+1}B,v^{1-p'}}-b_{B,v}\right| ^{p'}v(w)^{1-p'}d\lambda(w)\right)^{1\over p'} \bigg\}^p\\
&=: v(B)^{1-\kappa}\|f\|^p_{L^{p,\kappa}(bD)}\bigg[\sum_{j=1}^{\infty}{v(2^{j+1}B)^{\kappa\over p}\over \lambda(2^jB)}\left({II}_{21}+{II}_{22} \right)\bigg]^p
\end{align*}


For ${II}_{21}$, since $v\in A_p(bD)$, we have $v^{1-p'}\in A_{p'}(bD)$, where $1/p+1/p'=1$. By Lemma \ref{bmoqq}, we can obtain that
$${II}_{21}\lesssim\|b\|_{{\rm {BMO}}(bD)}\left[v^{1-p'}(2^{j+1}B)\right]^{1\over p'}.$$

For ${II}_{22}$,  by Lemma \ref{bmoqq}, we have
\begin{align*}
\left|b_{2^{j+1}B, v^{1-p'}}-b_{B,v}\right|&\leq \left|b_{2^{j+1}B, v^{1-p'}}-b_{2^{j+1}B}\right|+\left|b_{2^{j+1}B}-b_{B}\right|
+\left|b_{B}-b_{B,v}\right|\\
&\leq{1\over v^{1-p'}(2^{j+1}B)}\int_{2^{j+1}B}\left|b(w)-b_{2^{j+1}B}\right|v(w)^{1-p'}d\lambda(w)\\
& +2^{2n}(j+1)\|b\|_{{\rm {BMO}}(bD)}
+{1\over v(B)}\int_{B}\left|b(w)-b_{B}\right|v(w)d\lambda(w).
\end{align*}

Since $b\in {\rm {BMO}}(bD)$, by Lemma \ref{lv} and Lemma  \ref{lem-jn1}, there exist $\bar C_1>0$ and $\bar C_2>0$ such that for any ball $B$ and
$\alpha>0$,
$$v\left( \left\{g\in B: |b(z)-b_B|>\alpha \right\}\right)\leq \bar C_1v(B)e^{-{\bar C_2\alpha\sigma\over\|b\|_{{\rm {BMO}}(bD)}}}, $$
for some $\sigma>0$. Therefore,
\begin{align*}
\int_B |b(w)-b_B|v(w)d\lambda(w)&=\int_{0}^{\infty}v\left(\{z\in B: |b(z)-b_B|>\alpha \} \right)d\alpha\\
&\lesssim v(B)\int_{0}^{\infty}e^{-{\bar C_2\alpha\sigma\over\|b\|_{{\rm {BMO}}(bD)}}} d\alpha\\
&\lesssim v(B)\|b\|_{{\rm {BMO}}(bD)}.
\end{align*}
Similarly, we have
$$\int_{2^{j+1}B}\left|b(w)-b_{2^{j+1}B}\right|v(w)^{1-p'}d\lambda(w)\lesssim (j+1)\|b\|_{{\rm {BMO}}(bD)}v^{1-p'}(2^{j+1}B).$$
Thus,
$${II}_{22}\lesssim (j+1)\|b\|_{{\rm {BMO}}(bD)}\left[v^{1-p'}(2^{j+1}B)\right]^{1\over p'}$$
Now together with Lemma \ref {lv}, we have
\begin{align*}
{II}_2&\lesssim  v(B)^{1-\kappa}\|b\|^p_{{\rm {BMO}}(bD)}\|f\|^p_{L_{v}^{p,\kappa}(bD)}\bigg\{\sum_{j=1}^{\infty}{v(2^{j+1}B)^{\kappa\over p}\over \lambda(2^jB)}(j+1)\big[v^{1-p'}(2^{j+1}B)\big]^{1/p'}
\bigg\}^p\\
 &\lesssim \|b\|^p_{{\rm {BMO}}(bD)}\|f\|^p_{L_{v}^{p,\kappa}(bD)}\bigg[\sum_{j=1}^{\infty}{(j+1)v(B)^{1-k\over p}\over v(2^{j+1}B)^{1-\kappa\over p}}
 \bigg]^p\\
 &\lesssim \|b\|^p_{{\rm {BMO}}(bD)}\|f\|^p_{L_{v}^{p,\kappa}(bD)}\bigg[\sum_{j=1}^{\infty}(j+1) 2^{-(j+1)(1-\kappa)Q\sigma\over p}
 \bigg]^p\\
&\lesssim \|b\|^p_{{\rm {BMO}}(bD)}\|f\|^p_{L_{v}^{p,\kappa}(bD)}.
\end{align*}
\color{black}
Therefore,
$$II\lesssim \|b\|^p_{{\rm {BMO}}(bD)}\|f\|^p_{L_{v}^{p,\kappa}(bD)}.$$
Consequently, we obtain
$$\|[b,\EuScript R]f\|_{L_v^{p,\kappa}(bD)}\lesssim \|b\|_{{\rm BMO} (bD)}\|f\|^p_{L_{v}^{p,\kappa}(bD)}.$$
This completes the proof of  the necessity part.
\end{proof}

In order to prove the sufficiency of Theorem \ref{cauchy}, we need the following lemma.

\begin{lemma}[\cite{DLLWW}]\label{nc}
Denote by $ C_1(w,z)$ and $ C_2(w,z)$
the real and imaginary parts of $ C(w,z)$, respectively.
Then there is  at least one of the $C_i$ above satisfies the following argument:

There exist positive constants $\gamma_0, A$ such that for every ball $B=B_r(z_0)\subset bD$ with $r<\gamma_0$, there exists another ball
$\tilde B = B_r(w_0)\subset bD$ with  $Ar \leq {\tt d}(w_0,z_0) \leq (A+1)r$ such that
for every $z\in B$ and $w\in \tilde B$, $ C_i(w,z)$ does not change sign and $$| C_i(w,z)|\geq { c\over  {\tt d}(w,z)^{2n} }.$$

\end{lemma}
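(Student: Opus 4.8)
The plan is to exploit the explicit structure of the Cauchy kernel $C(w,z)$ given in \cite{LS}, namely the fact that its leading singularity comes from a denominator of the form $\langle \partial \rho(w), w-z\rangle$ (the Levi polynomial), whose real part is comparable to ${\tt d}(w,z)^2$ near the diagonal by strong pseudoconvexity, while its imaginary part oscillates. First I would recall the precise formula for $C^\sharp(w,z)$ (and hence for $C(w,z) = C^\sharp(w,z)+R(w,z)$): up to bounded factors it is a constant times $\mathcal{N}(w,z)^n$ modulo smoother terms, where $\mathcal{N}(w,z)$ is the reciprocal of the generating function and ${\rm Re}\,\langle\partial\rho(w),w-z\rangle \gtrsim {\tt d}(w,z)^2$ for $w,z$ close. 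Since $R(w,z) = \mathcal O({\tt d}(w,z)^{-(2n-1)})$ by \eqref{cr}, on the region ${\tt d}(w,z)\approx r$ the remainder contributes only $\mathcal O(r^{-(2n-1)})$, which is of lower order than the main term $\approx r^{-2n}$; hence it suffices to analyze $C^\sharp$ and then absorb $R$ for $r$ small.

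Next, the key computation: writing the leading part of $C^\sharp(w,z)$ as $c_n\,\overline{\langle\partial\rho(w),w-z\rangle}^{\,-n}$ times a nonvanishing continuous factor (this is where one uses the $C^2$ structure from \cite{LS}), one has $|\langle\partial\rho(w),w-z\rangle| \approx {\tt d}(w,z)^2$, so $|C^\sharp(w,z)|\approx {\tt d}(w,z)^{-2n}$ automatically — the only issue is controlling the \emph{argument}. Set $\zeta := \langle\partial\rho(w),w-z\rangle^{-n}$; then $C_1 = {\rm Re}$ and $C_2 = {\rm Im}$ of $\zeta$ (times the positive continuous factor), and $\zeta$ does not change sign in, say, $C_1$ precisely when $\arg(\langle\partial\rho(w),w-z\rangle^{-n})$ stays in a fixed sub-interval of length $<\pi$. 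The idea is to choose the auxiliary ball $\tilde B = B_r(w_0)$ with $w_0$ at quasidistance $\approx Ar$ from $z_0$ in a \emph{fixed coordinate direction} (e.g. along a complex-tangential or along the real-normal direction at $z_0$), so that for all $z\in B_r(z_0)$, $w\in B_r(w_0)$ the vector $w-z$ is a small perturbation of that fixed direction; choosing $A$ large (so the separation dominates the radius) and $\gamma_0$ small, the argument of $\langle\partial\rho(w),w-z\rangle$ — and hence of its $(-n)$-th power — is confined to an arc of length $<\pi/2$, so at least one of $C_1,C_2$ is bounded below in modulus by $c\,{\tt d}(w,z)^{-2n}$ and is sign-definite there. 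A pigeonhole over the two choices of direction (or over $C_1$ versus $C_2$) gives the ``at least one of the $C_i$'' conclusion.

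The main obstacle I anticipate is the low regularity: with only $bD\in C^2$, the generating function and the Levi polynomial are merely $C^1$ in the appropriate sense, so one cannot freely differentiate, and the constructive decomposition of \cite{LS} must be quoted carefully to know exactly which pieces of $C(w,z)$ carry the non-oscillating main term and which are genuinely lower order. One must verify that the ``bad'' remainder $R(w,z)$ and the lower-order pieces of $C^\sharp$ together stay below $\tfrac12 c\,{\tt d}(w,z)^{-2n}$ on the relevant annular region ${\tt d}(w,z)\approx Ar$, which is fine for $r<\gamma_0$ since those terms are $O(r^{-(2n-1)})$, but the bookkeeping of constants ($A$ large enough to separate the balls, $\gamma_0$ small enough to both linearize the geometry and kill the lower-order terms, $c$ small enough to survive all perturbations) is delicate and is where the real work lies. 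Everything else — the two-sided bound on $|C_i|$ and the non-vanishing of sign — then follows from elementary estimates on $\arg$ of a complex number confined to a small arc. Since the statement is quoted from \cite{DLLWW}, I would ultimately cite that reference for the full details and only sketch this argument.
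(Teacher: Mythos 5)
First, a point of comparison: this paper does not prove Lemma \ref{nc} at all --- it is imported verbatim from \cite{DLLWW} --- so the only question is whether your sketch would actually deliver the statement. Your overall strategy (write $C=C^\sharp+R$, absorb $R$ because it is $O({\tt d}(w,z)^{-(2n-1)})$ against a main term of size ${\tt d}(w,z)^{-2n}$, and pin the argument of the modified Levi denominator by separating $\tilde B$ from $B$ in a fixed direction, with $A$ large and then $\gamma_0$ small) is indeed the mechanism behind the cited proof. But two steps, as written, would fail.

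First, the arc-length bookkeeping: confining $\arg\langle\partial\rho(w),w-z\rangle$ to an arc of length $<\pi/2$ does \emph{not} confine the argument of its $(-n)$-th power to an arc of length $<\pi/2$; the power multiplies the oscillation by $n$, so you need the base argument to oscillate by at most $c/n$ around an (essentially universal) value. Your own mechanism can deliver this, but the claim as stated is wrong precisely in the range $n\ge2$ covered by the lemma. Second, the choice of direction: a complex-tangential separation does not pin the argument at all. For $w,z\in bD$ separated tangentially, both $\mathrm{Re}\,g(w,z)$ and $\mathrm{Im}\,g(w,z)$ (with $g$ the modified Levi polynomial generating ${\tt d}$) are of size $|w-z|^2$, and strong pseudoconvexity only gives $\mathrm{Re}\,g\gtrsim|w-z|^2$, i.e. $\arg g\in(-\pi/2+c,\pi/2-c)$, an arc of length nearly $\pi$ that is useless after raising to the power $n$; and the ``real-normal'' direction does not even stay on $bD$. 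What works is displacement along the totally real tangential (``bad'') direction $J\nu$ by Euclidean amount $t\approx A^2r^2$: there $\mathrm{Im}\,g\approx t$ is linear in the displacement while $\mathrm{Re}\,g\approx|w-z|^2\lesssim A^4r^4+r^2$, so $\arg g$ is pinned near $\pm\pi/2$ up to errors $O(1/A)+O(A^2\gamma_0^2)$, which can be made smaller than $c/n$. Finally, note the lemma demands a single index $i$ valid for \emph{all} balls, so your per-ball pigeonhole over $C_1$ versus $C_2$ (or over directions) proves a weaker statement; one gets the uniform choice of $i$ because, with the canonical $J\nu$ displacement, the pinned value of $\arg C(w_0,z_0)$ is universal --- this uses that the numerator of $C^\sharp$ is, near the diagonal, the (positive) Leray--Levi density times the fixed constant $(2\pi i)^{-n}$, a fact you assumed implicitly but which must be quoted from the Lanzani--Stein construction. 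With these repairs your outline does match the argument of \cite{DLLWW}.
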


\begin{proof}[Proof of sufficiency of Theorem \ref{cauchy}]

\smallskip

We  turn to prove the sufficient condition, namely that if $[b,\EuScript C]$ is bounded on $L_v^{p,\kappa}(bD)$, then $b\in {\rm BMO}(bD)$. We mainly follow the method and technique in \cite{DLLWW}.

Assume that $b$ is in $L^1(bD)$ and that $\left\|[b,\EuScript C]\right\|_{L_v^p(bD)\to L_v^p(bD)} <\infty$.
Let $\gamma_0$ be the constant in Lemma \ref{nc}. We test the ${\rm BMO}(bD,d\lambda)$ condition on the case of balls with big radius and small radius.

Case 1: In this case we work with balls with a large radius, $r\geq \gamma_0$.

By \eqref{lambdab} and by the fact that
$\lambda(B)\geq \lambda( B_{\gamma_0}(z_0)) \approx \gamma_0^{2n}$, we obtain that
\begin{align*}
{1\over \lambda(B)} \int_B|b(w)-b_B|d\lambda(w)\lesssim \gamma_0^{-2n} \|b\|_{L^1(bD, d\lambda)}.
\end{align*}

Case 2: In this case we work with balls with a small radius, $r<\gamma_0$.

We aim to prove that for every fixed ball $B=B_r(z_0)\subset bD$ with radius $r<\gamma_0$,
\begin{align*}
{1\over \lambda(B)} \int_B|b(w)-b_B|d\lambda(w)\lesssim \left\|[b,\EuScript C]\right\|_{L_v^{p,\kappa}(bD)\to L_v^{p,\kappa}(bD)}.
\end{align*}

Now let $\tilde B=B_r(w_0)$ be the ball chosen as Lemma \ref{nc}, and let $m_b(\tilde B)$ be the median value of $b$ on the ball $\tilde B$ with respect to the measure $d\lambda$.

Following the sufficiency proof of Theorem 1.1 (1) in \cite{DLLWW},
 we can choose sets $F_{1}, F_{2}, E_{1}, E_{2}$ such that $\tilde B=F_{1}\cup F_{2}$, $B=E_{1}\cup E_{2}$, $E_{1}\cap E_{2}=\emptyset$,
\begin{align}\label{f1f2-1 1}
\lambda(F_{i}) \geq{\lambda(\tilde B)\over 2},\quad i=1,2.
\end{align}
and
\begin{align*}
{1\over \lambda(B)}\int_{E_{1}}\big|b(z)-m_b(\tilde B)\big|d\lambda(z)\lesssim {1\over \lambda(B)}\int_{E_{1}}\left|[b, \EuScript C](\chi_{F_1})(z)\right|d\lambda(z).
\end{align*}
\color{black}
Then, by H\"older's inequality, $v\in A_p$ and the fact that $\left\|[b,\EuScript C]\right\|_{L_v^{p,\kappa}(bD)\to L_v^{p,\kappa}(bD)} <\infty$, we can obtain
\begin{align*}
&{1\over \lambda(B)}\int_{E_{1}}\big|b(z)-m_b(\tilde B)\big|d\lambda(z)\\
&\lesssim
{1\over \lambda(B)}\left(\int_{E_1}v^{-{p'\over p}}d\lambda(z)\right)^{1\over p'}\bigg(\int_{E_{1}}\left|[b, \EuScript C](\chi_{F_1})(z)\right|^pv(z)d\lambda(z)\bigg)^{1\over p}\\
&\lesssim
 {1\over \lambda(B)}\left(\int_{B}v^{-{p'\over p}}d\lambda(z)\right)^{1\over p'}(v(B))^{\kappa\over p}\|[b, \EuScript C]\chi_{F_1}\|_{L_v^{p,\kappa}(bD)}\\
 &\lesssim
 \left(v(B)\right)^{{\kappa-1\over p}}
\|[b, \EuScript C]\|_{L_v^{p,\kappa}(bD)\to L_v^{p,\kappa}(bD)}\|\chi_{F_1}\|_{L_v^{p,\kappa}(bD)}\\
 &\lesssim
 \left(v(B)\right)^{{\kappa-1\over p}} \left(v(\tilde B)\right)^{{1-\kappa\over p}}
\|[b, \EuScript C]\|_{L_v^{p,\kappa}(bD)\to L_v^{p,\kappa}(bD)}\\
&\lesssim
  \|[b, \EuScript C]\|_{L_v^{p,\kappa}(bD)\to L_v^{p,\kappa}(bD)}.
\end{align*}

Similarly, we can obtain that
\begin{align*}
{1\over \lambda(B)}\int_{E_{2}}\big|b(z)-m_b(\tilde B)\big|d\lambda(z)
\lesssim
 \|[b, \EuScript C]\|_{L_v^{p,\kappa}(bD)\to L_v^{p,\kappa}(bD)}.
\end{align*}

Consequently,
\begin{align*}
{1\over \lambda(B)}\int_{B}\big|b(z)-m_b(\tilde B)\big|d\lambda(z)\lesssim
 \|[b, \EuScript C]\|_{L_v^{p,\kappa}(bD)\to L_v^{p,\kappa}(bD)}.
\end{align*}

Therefore,
\begin{align*}
{1\over \lambda(B)}\int_{B}\big|b(z)-b_B\big|d\lambda(z)
\leq {2\over \lambda(B)}\int_{B}\big|b(z)-m_b(\tilde B)\big|d\lambda(z)
\lesssim   \|[b, \EuScript C]\|_{L_v^{p,\kappa}(bD)\to L_v^{p,\kappa}(bD)}.
\end{align*}
This finishes the proof of sufficiency of Theorem \ref{cauchy}.
\end{proof}

\subsection{Characterisation of ${\rm VMO}(bD,d\lambda)$ via the Commutator $[b, \EuScript C]$}

Based on Lemma \ref{measure lambda}, we have the following  fundamental lemma from \cite[Lemma 1.2]{KL2}. 
\begin{lemma}\label{lemma 1 KL2}
Let $b\in{\rm VMO}(bD, d\lambda)$. Then for any $\xi>0$, there is a function $b_\xi\in BUC(bD)$ such that
\begin{align}\label{f_eta -f}
\|b_\xi -b\|_*<\xi.
\end{align}
Moreover, $b_\xi$ satisfies the following conditions: there is an $\epsilon\in (0,1)$ such that
\begin{align}\label{f_eta}
|b_\xi(w) -b_\xi(z)|<C_\xi {\tt d}(w,z)^\epsilon,\quad \forall w,z\in bD.
\end{align}
\end{lemma}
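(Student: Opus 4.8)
The statement is \cite[Lemma 1.2]{KL2} transcribed to the present setting, so the plan is to check that its hypotheses are met here and then recall the construction. Lemma \ref{measure lambda} already shows that $(bD,{\tt d},\lambda)$ is a space of homogeneous type (with $\lambda(B_r(w))\approx r^{2n}$ by \eqref{lambdab}) which moreover satisfies the additional regularity bound $\lambda(B_r(x)\setminus B_r(y))+\lambda(B_r(y)\setminus B_r(x))\lesssim({\tt d}(x,y)/r)^{\epsilon_0}$ for ${\tt d}(x,y)\le r\le 1$; this is precisely the condition (1.1) assumed in \cite{KL2}, so the cited lemma applies directly. For completeness I would indicate the argument, the point being that $b_\xi$ is produced by averaging $b$ at a small scale.

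Concretely, for $\delta\in(0,1)$ to be chosen I would set
$$b_\delta(z):=\frac{1}{\lambda(B_\delta(z))}\int_{B_\delta(z)}b(w)\,d\lambda(w),\qquad z\in bD,$$
and then verify three facts. First, $b_\delta$ is bounded: since $bD$ is compact, \eqref{lambdab} gives $\lambda(B_\delta(z))\gtrsim\delta^{2n}$ uniformly in $z$, whence $\|b_\delta\|_{L^\infty(bD)}\lesssim\delta^{-2n}\|b\|_{L^1(bD)}<\infty$. Second, $\|b_\delta-b\|_*\to 0$ as $\delta\to 0$: fixing a test ball $B=B_s(z_0)$, I would split into the cases $s\gtrsim\delta$ (compare the averages of $b$ over $B_\delta(z)$ and over $B$ using the doubling property, reducing the oscillation of $b_\delta$ on $B$ to that of $b$ on a comparable ball) and $s\lesssim\delta$ (for $z\in B$ the ball $B_\delta(z)$ is comparable to one fixed ball of radius $\sim\delta$, so $\frac1{\lambda(B)}\int_B|b_\delta-b_B|\,d\lambda$ is controlled by the oscillation of $b$ at scale $\sim\delta$); the hypothesis $b\in{\rm VMO}(bD)$ makes both quantities uniformly small as $\delta\to 0$. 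Then I would pick $\delta=\delta(\xi)$ with $\|b_\delta-b\|_*<\xi$ and set $b_\xi:=b_\delta$, giving \eqref{f_eta -f}.

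Finally I would prove the Hölder modulus \eqref{f_eta}. If ${\tt d}(w,z)>\delta$, use $|b_\xi(w)-b_\xi(z)|\le 2\|b_\xi\|_{L^\infty(bD)}$ together with $\diam(bD)<\infty$ to absorb this into $C_\xi\,{\tt d}(w,z)^{\epsilon_0}$ after enlarging $C_\xi$. If ${\tt d}(w,z)\le\delta$, choose a constant $c$ (for instance the mean of $b$ over a ball of radius $\sim\delta$ containing $B_\delta(w)\cup B_\delta(z)$) and write
$$b_\xi(w)-b_\xi(z)=\frac{1}{\lambda(B_\delta(w))}\int_{B_\delta(w)}(b-c)\,d\lambda-\frac{1}{\lambda(B_\delta(z))}\int_{B_\delta(z)}(b-c)\,d\lambda;$$
splitting each integral over $B_\delta(w)\cap B_\delta(z)$ and the symmetric difference, and using $\lambda(B_\delta(w))\approx\lambda(B_\delta(z))\approx\delta^{2n}$, the regularity bound of Lemma \ref{measure lambda}, and $\frac1{\lambda(B)}\int_B|b-c|\,d\lambda\lesssim\|b\|_{{\rm BMO}(bD)}$ on the relevant balls, one gets $|b_\xi(w)-b_\xi(z)|\lesssim\|b\|_{{\rm BMO}(bD)}({\tt d}(w,z)/\delta)^{\epsilon_0}$, i.e. \eqref{f_eta} with $\epsilon=\epsilon_0\in(0,1)$ and $C_\xi\sim\delta^{-\epsilon_0}\|b\|_{{\rm BMO}(bD)}$; in particular $b_\xi$ is uniformly continuous, so $b_\xi\in BUC(bD)$. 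The main obstacle is the case $s\lesssim\delta$ in the $\|b_\delta-b\|_*$ estimate, where the VMO hypothesis enters essentially and the case analysis on the scale of the test ball must be handled with care; the Hölder bound, by contrast, is a routine consequence of Lemma \ref{measure lambda}.
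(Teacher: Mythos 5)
Your primary route is exactly the paper's: the paper gives no proof of this lemma beyond observing that, by Lemma \ref{measure lambda}, $(bD,{\tt d},\lambda)$ is doubling and satisfies condition (1.1) of \cite{KL2}, and then quoting \cite[Lemma 1.2]{KL2}. Your first paragraph does precisely this, so at that level the proposal is correct and complete.

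Two points in your ``for completeness'' sketch would, however, need repair if you actually wrote it out. First, in the estimate of $\|b_\delta-b\|_*$ for a test ball $B=B_s(z_0)$ with $s\gg\delta$, you cannot simply ``compare the averages of $b$ over $B_\delta(z)$ and over $B$'': telescoping between the scales $\delta$ and $s$ costs a factor of order $\log(s/\delta)$, and the resulting quantity is controlled only by $\|b\|_{{\rm BMO}(bD)}$, not by a small quantity. The standard fix is to cover $B$ by balls of radius $\approx\delta$ with bounded overlap, compare $b_\delta(z)=b_{B_\delta(z)}$ with the mean of $b$ over the fixed covering ball containing $z$ (these two balls are comparable, so doubling applies), and then sum; this bounds $\frac1{\lambda(B)}\int_B|b_\delta-b|\,d\lambda$ by the VMO modulus of $b$ at scale $C\delta$, uniformly in $s$. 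Second, in the H\"older estimate \eqref{f_eta}, the contribution of the symmetric difference $B_\delta(w)\triangle B_\delta(z)$ cannot be handled with only the $L^1$ information $\frac1{\lambda(B)}\int_B|b-c|\,d\lambda\lesssim\|b\|_{{\rm BMO}(bD)}$: smallness of $\lambda\bigl(B_\delta(w)\triangle B_\delta(z)\bigr)$ plus an $L^1$ bound gives no quantitative rate. You need higher integrability of $b-c$ on the enveloping ball, i.e. John--Nirenberg (Lemma \ref{lem-jn1}) or the $L^2$ form of the BMO norm, after which Cauchy--Schwarz together with Lemma \ref{measure lambda} and \eqref{lambdab} converts the small measure of the symmetric difference into a bound of the form $C_\delta\|b\|_{{\rm BMO}(bD)}\,{\tt d}(w,z)^{\epsilon_0/2}$; this halves the exponent, which is harmless since the lemma only asks for some $\epsilon\in(0,1)$. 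With these two adjustments your sketch is the standard argument underlying \cite[Lemma 1.2]{KL2}.
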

For each $0<\eta<<1$, we let
$R^\eta(w,z)$ be a continuous extension of the kernel $R(w,z)$ of $\EuScript R$ from $bD\times bD\backslash \{(w,z): {\tt d}(w,z)<\eta\}$ to $bD\times bD$ such that
\begin{align*}
&R^\eta(w,z) =R(w,z),\quad {\rm if}\ \ {\tt d}(w,z)\geq\eta;\\
&|R^\eta(w,z)|\lesssim {1\over {\tt d}(w,z)^{2n-1}}, \quad {\rm if}\ \  {\tt d}(w,z)<\eta;\\
& R^\eta(w,z)=0, \quad {\rm if}\ \  {\tt d}(w,z)<\eta/c\ \ {\text {for some}}\ \ c>1.
\end{align*}
Let $\EuScript R^\eta$ be the integral operator associate to the kernel $R^\eta(w,z)$.
Then we have the following approximation result.
\begin{lemma}\label{lemma 2 KL2}
Let $b\in BUC(bD)$ satisfy
\begin{align}\label{f_eta 1}
|b(w) -b(z)|<C_\eta {\tt d}(w,z)^\epsilon,\quad {\rm\ for\ some\ } C_\eta\geq1,\ \epsilon\in(0,1),\ \forall w,z\in bD.
\end{align}
Then for $1<p<\infty$, $0<\kappa<1$ and $v\in A_p$, we have
$$ \|[b, \EuScript R]-[b, \EuScript R^\eta]\|_{L_v^{p,\kappa}(bD)\to L_v^{p,\kappa}(bD)}\to0 $$
as $\eta\to0$.
\end{lemma}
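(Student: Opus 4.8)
The plan is to estimate the kernel difference $R(w,z)-R^\eta(w,z)$ and show that the resulting operator, commuted against $b$, has small norm on $L_v^{p,\kappa}(bD)$. First I would observe that by construction $R^\eta(w,z)-R(w,z)$ is supported in the region $\{(w,z): {\tt d}(w,z)<\eta\}$ and obeys the pointwise bound
\begin{align*}
|R(w,z)-R^\eta(w,z)|\lesssim \frac{1}{{\tt d}(w,z)^{2n-1}}\,\chi_{\{{\tt d}(w,z)<\eta\}}.
\end{align*}
Writing $\EuScript R-\EuScript R^\eta$ for the operator with this kernel, we then have
\begin{align*}
\big|([b,\EuScript R]-[b,\EuScript R^\eta])f(z)\big|
\lesssim \int_{{\tt d}(w,z)<\eta}\frac{|b(z)-b(w)|}{{\tt d}(w,z)^{2n-1}}\,|f(w)|\,d\lambda(w).
\end{align*}
Using the H\"older-type regularity \eqref{f_eta 1} of $b$, the factor $|b(z)-b(w)|$ is controlled by $C_\eta\,{\tt d}(w,z)^\epsilon$, so the kernel of the commutator difference is dominated by $C_\eta\,{\tt d}(w,z)^{\epsilon-2n+1}\chi_{\{{\tt d}(w,z)<\eta\}}$. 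Since $\epsilon-2n+1 > -2n$ and the ball volume satisfies $\lambda(B_s(z))\approx s^{2n}$ by \eqref{lambdab}, the singularity is integrable near the diagonal; a dyadic decomposition over the annuli ${\tt d}(w,z)\sim 2^{-k}\eta$ together with \eqref{lambdab} shows that
\begin{align*}
\sup_{z\in bD}\int_{{\tt d}(w,z)<\eta}C_\eta\,{\tt d}(w,z)^{\epsilon-2n+1}\,d\lambda(w)\lesssim C_\eta\,\eta^{\epsilon+1}
\end{align*}
and the same bound holds for the integral in the $z$-variable with $w$ fixed.

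Next I would pass from the pointwise estimate to boundedness on the weighted Morrey space. Because the commutator difference is dominated by a positive integral operator whose kernel $K_\eta(w,z):=C_\eta\,{\tt d}(w,z)^{\epsilon-2n+1}\chi_{\{{\tt d}(w,z)<\eta\}}$ satisfies both $\sup_z\int |K_\eta(w,z)|\,d\lambda(w)\lesssim C_\eta\eta^{\epsilon+1}$ and $\sup_w\int|K_\eta(w,z)|\,d\lambda(z)\lesssim C_\eta\eta^{\epsilon+1}$, a Schur-test argument gives $L^p(bD)\to L^p(bD)$ operator norm $\lesssim C_\eta\eta^{\epsilon+1}$ for all $1<p<\infty$. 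To upgrade this to the weighted Morrey norm I would argue exactly as in the necessity proof of Theorem \ref{cauchy}: fix a ball $B=B_r(z_0)$, split $f=f\chi_{2B}+f\chi_{bD\setminus 2B}=:f_1+f_2$, handle the local piece $f_1$ by combining the weighted $L^p$ boundedness of the associated operator (which follows from the Schur bound together with the standard $A_p$ extrapolation/interpolation machinery, or more simply because $K_\eta$ is an integrable kernel on a compact space with the weight controlled by $[v]_{A_p}$) against $\frac{1}{v(B)^\kappa}\int_{2B}|f|^p v\,d\lambda\le \|f\|_{L_v^{p,\kappa}}^p$, and handle the tail piece $f_2$ by the now-familiar geometric summation over the dyadic annuli $2^{j+1}B\setminus 2^jB$ using Lemma \ref{lv} to control $v(B)^{(1-\kappa)/p}/v(2^{j+1}B)^{(1-\kappa)/p}\lesssim 2^{-j(1-\kappa)\sigma Q/p}$. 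In fact the tail is easier here since $K_\eta$ is supported where ${\tt d}(w,z)<\eta<r$ (for $\eta$ small relative to $r$), so it contributes nothing; for $r\le\eta$ the same annular estimate used in the necessity proof applies verbatim with the extra small factor $C_\eta\eta^{\epsilon+1}$.

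Putting the two pieces together we obtain
\begin{align*}
\|[b,\EuScript R]-[b,\EuScript R^\eta]\|_{L_v^{p,\kappa}(bD)\to L_v^{p,\kappa}(bD)}\lesssim C_\eta\,\eta^{\epsilon+1},
\end{align*}
and I would conclude by noting that in this lemma $C_\eta$ is the H\"older constant of a \emph{fixed} function $b\in BUC(bD)$ from \eqref{f_eta 1}; it does not depend on $\eta$, so the right-hand side is $\lesssim \eta^{\epsilon+1}\to 0$ as $\eta\to0$. (Even if one insists on reading $C_\eta$ as the constant $C_\xi$ produced by Lemma \ref{lemma 1 KL2}, the relevant $b$ here is the already-fixed approximant, so its H\"older constant is a fixed number and the conclusion is unchanged.) The one point that needs care — and the main obstacle — is making the transition from the bare Schur/kernel bound to the \emph{weighted} Morrey estimate clean: one must verify that the local piece really is controlled by $\|f\|_{L_v^{p,\kappa}}$ with a constant depending only on $[v]_{A_p}$, $p$, $\kappa$ and the dimension (and on $C_\eta\eta^{\epsilon+1}$), which is where I would lean on Lemma \ref{lv}, Lemma \ref{maximal}, and the boundedness argument already carried out in the necessity part of Theorem \ref{cauchy}. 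Everything else is routine size estimation with \eqref{lambdab}.
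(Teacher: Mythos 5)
Your kernel-level estimates coincide with the paper's: the difference kernel is supported in $\{(w,z):{\tt d}(w,z)<\eta\}$ and bounded by ${\tt d}(w,z)^{-(2n-1)}$, the H\"older condition \eqref{f_eta 1} converts the commutator factor into $C_\eta{\tt d}(w,z)^{\epsilon}$, and the dyadic annular sum together with \eqref{lambdab} produces the gain $\eta^{\epsilon+1}$; your reading of $C_\eta$ as the fixed H\"older constant of the already-chosen $b$ is also the intended one. The genuine gap is the bridge from these estimates to the \emph{weighted} bound. The Schur-type bounds $\sup_z\int K_\eta(w,z)\,d\lambda(w)\lesssim C_\eta\eta^{\epsilon+1}$ and $\sup_w\int K_\eta(w,z)\,d\lambda(z)\lesssim C_\eta\eta^{\epsilon+1}$ only give an unweighted $L^p(bD)$ bound, and the claim that weighted $L^p_v$ boundedness then follows ``from standard $A_p$ extrapolation/interpolation machinery'' does not work: Rubio de Francia extrapolation takes weighted estimates as input and cannot be run from an unweighted bound, and interpolating the unweighted bound with anything never introduces the weight. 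The fallback ``$K_\eta$ is an integrable kernel on a compact space'' is likewise not a proof, since $K_\eta$ is unbounded near the diagonal and an $L^1$-in-each-variable bound by itself does not control $\int_{bD}|T_\eta f|^p v\,d\lambda$ with a constant carrying the small factor.

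The repair is already contained in your own computation: running the same annular decomposition with $|f(w)|$ inside the integral (rather than on the bare kernel) yields the pointwise domination $\big|([b,\EuScript R]-[b,\EuScript R^\eta])f(z)\big|\lesssim C_\eta\,\eta^{\epsilon+1}\,Mf(z)$, which is exactly the paper's route. From there you can either invoke the boundedness of the maximal operator on $L_v^{p,\kappa}(bD)$ (valid on spaces of homogeneous type, see \cite{Kok}), which makes the local/tail splitting unnecessary and finishes the proof in one line, or, if you prefer to keep the splitting modeled on the necessity part of Theorem \ref{cauchy}, use the maximal-function domination together with Lemma \ref{maximal} and the doubling of $v$ for the local piece, and note, as you do, that the tail is void once $\eta$ is small relative to $r$ (with the caveat that $z\in B$, $w\notin 2B$ only gives ${\tt d}(w,z)\gtrsim r$ up to the quasi-metric constants $C_d, A_1$, so the cutoff ball should be $cB$ for a suitable structural constant $c$). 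With that replacement the argument closes and gives $\|[b,\EuScript R]-[b,\EuScript R^\eta]\|_{L_v^{p,\kappa}(bD)\to L_v^{p,\kappa}(bD)}\lesssim C_\eta\eta^{\epsilon+1}\to 0$, in agreement with the paper.
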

\begin{proof}
Let $f\in L_v^{p,\kappa}(bD)$. For any $z\in bD$, we have

\begin{align*}
 &\Big|[b, \EuScript R]f(z)-[b, \EuScript R^\eta]f(z)\Big|\\
 &= \bigg|\int_{{\eta \over c}\leq {\tt d}(w,z)<\eta}\Big(b(z)-b(w)\Big)R^\eta(w,z)f(w)d\lambda(w) \\
 &\quad- \int_{ {\tt d}(w,z)<\eta}\Big(b(z)-b(w)\Big)R(w,z)f(w)d\lambda(w)   \bigg|\\
&\lesssim \int_{ {\tt d}(w,z)<\eta}{{\tt d}(w,z)^\epsilon\over {\tt d}(w,z)^{2n-1}}|f(w)|d\lambda(w)\\
&\lesssim \sum_{j=0}^{\infty}\int_{ {\eta\over 2^{j+1}}\leq{\tt d}(w,z)<{\eta\over 2^{j}}}{{\tt d}(w,z)^{\epsilon+1}\over {\tt d}(w,z)^{2n}}|f(w)|d\lambda(w)\\
&\lesssim \sum_{j=0}^{\infty}{\eta^{\epsilon+1}\over 2^{(\epsilon+1)j}}{1\over \lambda\big(B_{{\eta\over 2^{j}}}(z) \big)}\int_{ {\tt d}(w,z)<{\eta\over 2^{j}}}|f(w)|d\lambda(w)\\
&\lesssim \eta^{\epsilon+1}Mf(z).
\end{align*}
Then we have

$$ \|[b, \EuScript R]f-[b, \EuScript R^\eta]f\|_{L_v^{p,\kappa}(bD)}\lesssim \eta^{\epsilon+1} \|f\|_{L_v^{p,\kappa}(bD)}. $$

This implies that

$$\lim_{\eta\to0} \|[b, \EuScript R]-[b, \EuScript R^\eta]\|_{L_v^{p,\kappa}(bD)\to L_v^{p,\kappa}(bD)}=0.$$
\end{proof}

We now prove  Theorem \ref{vmo}.

\medskip

\begin{proof}[Proof of   Theorem \ref{vmo}]
{\bf Sufficiency:}
Assume that $v\in A_p, 1<p<\infty$ and that $[b, \EuScript C]$ is compact on $L_v^{p,\kappa}(bD)$, then $[b,\EuScript C]$ is bounded on $L_v^{p,\kappa}(bD)$. By  Theorem \ref{cauchy}, we have $b\in {\rm BMO}(bD)$. Without loss of generality, we may assume that $\|b\|_{  {\rm BMO}(bD) }=1$.

\textcolor{black}{
To show $b\in {\rm VMO}(bD)$, we  seek a contradiction.  In its simplest form, the contradiction is that there is no bounded operator $T \;:\; \ell^{p} (\mathbb N) \to \ell^{p} (\mathbb N)$ with $Te_{j } = T e_{k} \neq 0$ for all $j,k\in \mathbb N$.  Here, $e_{j}$ is the
standard basis for $\ell^{p} (\mathbb N)$.  }

\textcolor{black}{
The main step is to construct the approximates to a standard basis in $\ell^{p}$, namely a sequence of functions $\{g_{j}\}$ such that
$ \lVert g_{j } \rVert _{L_v^{p,\kappa}(bD)} \simeq 1$,  and for a nonzero $ \phi$,   we have $\lVert \phi - [b, \EuScript C] g_{j}\rVert
 _{L_v^{p,\kappa}(bD) } <2^{-j}$.  }

Suppose that $b\notin {\rm VMO}(bD)$, then there exist $\delta_0>0$ and a sequence $\{B_j\}_{j=1}^\infty:=\{B_{r_j}(z_j)\}_{j=1}^\infty$ of balls such that
\begin{align}\label{delta0}
{1\over \lambda(B_j)} \int_{B_j} |b(z)-b_{B_j}|d\lambda(z)\geq \delta_0.
\end{align}

Without lost of generality, we assume that for all $j$, $r_j<\gamma_0$, where $\gamma_0$ is
as in Lemma \ref{nc}.

Now choose a subsequence $\{B_{j_i}\}$ of $\{B_j\}$ such that
\begin{align}\label{ratio1}
r_{j_{i+1}} \leq{1\over 4c_\lambda} r_{j_{i}},
\end{align}
where $c_\lambda$ is the constant as in \eqref{lambdab}.

For the sake of simplicity we drop the subscript $i$, i.e., we still denote $\{B_{j_i}\}$ by $\{B_{j}\}$.

Following the steps in sufficiency proof of Theorem 1.1 (2) in \cite{DLLWW}, for each such $B_j$, we can choose a corresponding ball  $\tilde B_j$  and the corresponding disjoint subsets $E_{j,1}, E_{j,2}\subset B_j$ , $\widetilde F_{j,1}, \widetilde F_{j,2}\subset\tilde B_j$, such that $B_j=E_{j,1}\cup E_{j,2}$,
\begin{align}\label{Fj1}
\lambda(\widetilde F_{j,i}) \geq{\lambda(\tilde B_j)\over 4},\quad i=1,2.
\end{align}
and
\begin{align*}
{1\over \lambda(B_j)}\int_{E_{j,1}}\big|b(z)-m_b(\tilde B_j)\big|d\lambda(z)
&\lesssim
{1\over \lambda(B_j)}\int_{E_{j,1}}\left|[b, \EuScript C](\chi_{\widetilde F_{j,1}})(z)\right|d\lambda(z).
\end{align*}
\color{black}

Next, by using H\"older's
inequality and $v\in A_p$ we further have
\begin{align*}
&{1\over \lambda(B_j)}\int_{E_{j,1}}\big|b(z)-m_b(\tilde B_j)\big|d\lambda(z)\\
&\lesssim
{1\over \lambda(B_j) } \left(\int_{E_{j,1}}v^{-{p'\over p}}d\lambda(z)\right)^{1\over p'}\bigg( \int_{E_{j,1}}\big|[b, \EuScript C](\chi_{\widetilde F_{j,1}})(z)\big|^pv(z)d\lambda(z) \bigg)^{1\over p}\\
&\lesssim
{1\over \lambda(B_j) }\lambda(B_j) v(B_j)^{-{1\over p}}v(B_j)^{{\kappa\over p}}\|[b, \EuScript C](\chi_{\widetilde F_{j,1}})\|_{L_v^{p,\kappa}(bD)}\\
&\lesssim \|[b, \EuScript C](f_j)\|_{L_v^{p,\kappa}(bD)},
\end{align*}
where in the above inequalities
we denote
$$ f_j := v(B_j)^{{\kappa-1\over p}}\chi_{\widetilde F_{j,1}} .$$

Thus, combining the above estimates we have that
$$ 0<\delta_0  \lesssim \|[b, \EuScript C](f_j)\|_{L_v^{p,\kappa}(bD)}.$$

Note that
\begin{align*}
\|f_j\|_{L_v^{p,\kappa}(bD)}&=v(B_j)^{{\kappa-1\over p}}\sup_{B}
\left\{\frac{1}{[v(B)]^\kappa}\int_{B}|\chi_{\widetilde F_{j,1}}(z)|^pv(z)\,d\lambda(z)\right\}^{1/p}\\
&=v(B_j)^{{\kappa-1\over p}}\sup_{B}
\left\{\frac{v(B\bigcap \widetilde F_{j,1})}{[v(B)]^\kappa} \right\}^{1/p}.
\end{align*}
Since $v\in A_p$, it follows that there exist positive constants $C_1,C_2$ and $\sigma\in (0,1)$ such that for any measurable set $E\subset B$,
$$\Big({\lambda(E)\over \lambda(B)}\Big)^p\leq C_1{v(E)\over v(B)}\leq C_2\Big({\lambda(E)\over \lambda(B)}\Big)^{\sigma}.$$
Combining this and \eqref{Fj1}, we obtain that
$$\sup_{B}
\left\{\frac{v(B\cap \widetilde F_{j,1})}{[v(B)]^\kappa} \right\}^{1/p}\leq \sup_{B}
\left\{ v(B\cap \widetilde F_{j,1})^{1-\kappa}  \right\}^{1/p}\leq   v( \widetilde F_{j,1})^{1-\kappa\over p}\leq   v( \widetilde B_{j })^{1-\kappa\over p}\thickapprox v(   B_{j })^{1-\kappa\over p}   $$
and
 $$\sup_{B}
\left\{\frac{v(B\cap \widetilde F_{j,1})}{[v(B)]^\kappa} \right\}^{1/p}\geq \left\{\frac{v(  \widetilde F_{j,1})}{[v(\widetilde B_{j })]^\kappa} \right\}^{1/p} \geq   v( \widetilde B_{j })^{1-\kappa\over p}\thickapprox v(   B_{j })^{1-\kappa\over p}.   $$
This implies that
$
\|f_j\|_{L_v^{p,\kappa}(bD)} \approx 1.
$

Thus, it is direct to see that $\{f_j\}_j$ is a bounded sequence in $L_v^{p,\kappa}(bD)$ with a uniform $L_v^{p,\kappa}(bD)$-lower bound away from zero.

Since $[b, \EuScript C]$ is compact, we obtain that the  sequence
$ \{[b, \EuScript C](f_j)\}_j $
has a convergent subsequence, denoted by
$$ \{[b, \EuScript C](f_{j_i})\}_{j_i}. $$
We denote the limit function by $g_0$, i.e.,
$$ [b, \EuScript C](f_{j_i})\to g_0 \quad{\rm in\ }  L_v^{p,\kappa}(bD), \quad{\rm as\ } i\to\infty.$$
Moreover,  $g_{0} \neq 0$.

After taking a further subsequence, labeled $g_{j}$, we have
\begin{itemize}
\item  $\lVert g_{j} \rVert _{L_v^{p,\kappa}(bD)}  \simeq 1$,
\item  $g_{j}$ are disjointly supported,
\item and $\lVert g_{0}  -  [b, \EuScript C] g_{j} \rVert _{L_v^{p,\kappa}(bD)}  < 2^{-j}$.
\end{itemize}

\textcolor{black}{
Take $a_{j } = j ^{ -\frac {p+1}{2p}}$, so that $ \{a_{j}\} \in \ell^{p} \setminus \ell^{1}$.   It is immediate that $ \gamma = \sum_{j} a_{j} g_{j} \in
L_v^{p,\kappa}(bD)$, hence $ [b, \EuScript C] \gamma \in L_v^{p,\kappa}(bD)$.  But, $ g_{0} \sum_{j} a_{j} \equiv \infty$, and yet
 \begin{align*}
\Bigl\lVert  g_{0} \sum_{j} a_{j} \Bigr\rVert _{L_v^{p,\kappa}(bD)}
& \leq \lVert  [b, \EuScript C] \gamma  \rVert _{L_v^{p,\kappa}(bD)}
+ \sum_{j} a_{j}  \lVert   g_{0} -  [b, \EuScript C] g_{j}  \rVert _{L_v^{p,\kappa}(bD)}  < \infty.
\end{align*}
This contradiction shows that $b\in {\rm VMO}(bD)$.
}

\smallskip

{\bf Necessity:}
Recall that $\EuScript C=\EuScript C^\sharp+\EuScript R$. Since the kernel $\EuScript C^\sharp$ is a standard kernel,  $[b, \EuScript C^\sharp]$ is compact on $L_v^{p,\kappa}(bD)$. Therefore, we only need to show that $[b, \EuScript R]$ is also compact on
$L_v^{p,\kappa}(bD)$.

From Lemma \ref {lemma 1 KL2}, for any $\xi>0$, there exists $b_\xi\in BUC(bD)$ such that
$\|b-b_\xi\|_*<\xi$. Then
by Theorem \ref{cauchy}, we have
\begin{align*}
\|[b, \EuScript R]f-[b_\xi, \EuScript R]f\|_{L_v^{p,\kappa}(bD)}\leq C_p\|f\|_{L_v^{p,\kappa}(bD)}\|b-b_\xi\|_*
<\xi C_p\|f\|_{L_v^{p,\kappa}(bD)}.
\end{align*}
Thus, to prove that $[b, \EuScript R]$ is compact on $L_v^{p,\kappa}(bD)$, it suffices to prove that $[b_\xi, \EuScript R]$ is compact on $L_v^{p,\kappa}(bD)$.
By Lemma \ref {lemma 1 KL2} and \eqref {f_eta}, without loss of generality, we may assume that $b\in BUC(bD)$ and \eqref {f_eta 1} holds.
By Lemma \ref {lemma 2 KL2}, it suffices to prove that for any fixed $\eta$ satisfying $0<\eta \ll 1$,  $[b, \EuScript R^\eta]$ is compact on $L_v^{p,\kappa}(bD)$.

Since $R(w,z)$ is continuous on  $bD\times bD\backslash \{(z,z): z\in bD\}$, for any $f\in L_v^{p,\kappa}(bD)$, we  see that $[b, \EuScript R^\eta]f$ is continuous on $bD$.
\color{black}
To conclude the proof, we now argue that the image of the unit ball of $L_v^{p,\kappa}(bD)$ under the commutator
$[b, \EuScript R^\eta] $ is an equicontinuous family.  Compactness follows from the  Ascoli--Arzela theorem.
\color{black}

It remains to prove equicontinuity.  For any $z,w\in bD$ with ${\tt d}(w,z)<1$, we have
\begin{align*}
&[b, \EuScript R^\eta]f(z)-[b, \EuScript R^\eta]f(w)\\
&=b(z)\int_{bD}R^\eta(u,z)f(u)d\lambda(u)-\int_{bD}R^\eta(u,z)b(u)f(u)d\lambda(u)\\
&\quad -b(w)\int_{bD}R^\eta(u,w)f(u)d\lambda(u)+\int_{bD}R^\eta(u,w)b(u)f(u)d\lambda(u)\\
&=(b(z)-b(w))\int_{bD}R^\eta(u,z)f(u)d\lambda(u)+b(w)\int_{bD}\left(R^\eta(u,z)- R^\eta(u,w)\right)f(u)d\lambda(u)\\
&\quad+\int_{bD}\left(R^\eta(u,w)-R^\eta(u,z)\right)b(u)f(u)d\lambda(u)\\
&=(b(z)-b(w))\int_{bD}R^\eta(u,z)f(u)d\lambda(u)
+\int_{bD}\left(R^\eta(u,w)-R^\eta(u,z)\right)\left(b(u)-b(w)\right)f(u)d\lambda(u)\\
&=:I(z,w)+I\!I(z,w).
\end{align*}

For $I(z,w)$, by H\"older's inequality, we have
\begin{align*}
&|I(z,w)|\\
&=\left|(b(z)-b(w))\right| \left|\int_{bD}R^\eta(u,z)f(u)d\lambda(u)\right|\\
&\leq c\left|(b(z)-b(w))\right|\bigg(\int_{bD}\left|R^\eta(u,z)\right|^{p'}v^{-{p'\over p}}d\lambda(u) \bigg)^{1\over p'}v(bD)^{{\kappa\over p}}\left\{\frac{1}{[v(bD)]^\kappa}\int_{bD}|f(z)|^pv(z)\,d\lambda(z)\right\}^{1/p}\\
&\leq {c \lambda(bD)v(bD)^{{\kappa-1\over p}}\|f\|_{L_v^{p,\kappa}(bD)}{\tt d}(w,z)^\epsilon},
\end{align*}
 {where the last inequality is due to the fact that $v\in A_p, R^\eta(u,z)\in C(bD\times bD)$ and $bD$ is bounded.}

Since $b$ is bounded, if we let ${\tt d}(w,z)<{\eta\over c\cdot  c_R}$, by a discussion similar to \cite[p. 645]{KL2}, we can obtain that
\begin{align*}
|I\!I(z,w)|
&=\left| \int_{bD}\left(R^\eta(u,w)-R^\eta(u,z)\right)\left(b(u)-b(w)\right)f(u)d\lambda(u) \right|\\
 &\leq
c \|b\|_{L^\infty(bD)}\int_{bD\setminus B_{\eta\over c}(z)}{{\tt d}(w,z)\over {\tt d}(u,z)^{2n}} |f(u)|d\lambda(u)\\
&\leq
c \|b\|_{L^\infty(bD)}v(bD)^{{\kappa\over p}}\|f\|_{L_v^{p,\kappa}(bD)}{\tt d}(w,z)
\left(\int_{bD\setminus B_{\eta\over c}(z)}{1\over {\tt d}(u,z)^{2np'}}v^{-{p'\over p}}d\lambda(u)\right)^{1\over p'}\\
&\leq
c \|b\|_{L^\infty(bD)}v(bD)^{{\kappa\over p}}\|f\|_{L_v^{p,\kappa}(bD)}{\tt d}(w,z)\big({\eta\over c} \big)^{-2n}\lambda(bD)v(bD)^{-{1\over p}}\\
&\leq
c_{\eta,p}\lambda(bD)v(bD)^{{\kappa-1\over p}}\|b\|_{L^\infty(bD)}\|f\|_{L_v^{p,\kappa}(bD)}{\tt d}(w,z).
\end{align*}

Therefore, $\{[b, \EuScript R^\eta](\mathcal U)\}$ is an equicontinuous family, where $\mathcal U$ is the unit ball in $L_v^{p,\kappa}(bD)$.
This finishes the proof of  Theorem \ref{vmo}.
\end{proof}

\section{Commutator of Cauchy-Leray integral for strongly $\mathbb C$-linearly convex domains with minimal smoothness}
\setcounter{equation}{0}

In this section, we focus on the bounded domain $D\subset \mathbb C^n$ which is  strongly $\mathbb C$-linearly convex and whose boundary satisfies the minimal regularity condition of class $C^{1,1}$ \cite{LS2014}.

Suppose $D$ is a bounded domain in $\C^n$ with defining function $\rho$
satisfying

1) $D$ is of class $C^{1,1}$, i.e., the first derivatives of its defining function $\rho$ are Lipschitz, and $|\nabla \rho(w)|>0$ whenever $w\in\{w:\rho(w)=0\}=bD$;

2) $D$ is strongly $\C$-linearly convex, i.e., $D$ is a bounded domain of $C^1$, and at any boundary point it satisfies either of the following two equivalent conditions
\begin{align*}
 |\Delta(w,z)|&\geq c|w-z|^2, \\
 d_E\big(z,w+T_w^{\mathbb C}\big)&\geq \tilde c|w-z|^2, 
\end{align*}
for some $c,\tilde c>0$, where
\begin{align}\label{delta}
\Delta(w,z)=\langle\partial \rho(w), w-z\rangle,
\end{align}
and $d_E(z,w+T_w^{\mathbb C})$ denotes the Euclidean distance from $z$ to the affine subspace $w+T_w^{\mathbb C}$. Note that
$T_w^{\mathbb C}:=\{v:\langle\partial \rho(w),v\rangle=0\}$ is the complex tangent space referred to the origin, $w+T_w^{\mathbb C}$ is its geometric realization as an affine space tangent to $bD$ at $w$.


On $bD$ there is a quasi-distance ${\tt d}$, which is defined as
$$ {\tt d}(w,z)= |\Delta(w,z)|^{1\over 2} = |\langle \partial\rho,w-z \rangle|^{1\over2},\quad w,z\in bD. $$

%
%

Let $d\lambda$ be the Leray--Levi measure $d\lambda$ on $bD$  (c.f. \cite{LS2014}).
According to \cite[Proposition 3.4]{LS2014},  $d\lambda$ is also equivalent to the induced Lebesgue measure $d\sigma$ on $bD$ in the following sense:
$$ d\lambda(w) =\tilde\Lambda(w)d\sigma(w)\quad {\rm \ for\ }\sigma {\rm\ a.e.\ } w\in bD,$$
and there are two strictly positive constants $c_1$ and $c_2$ so that
$$ c_1\leq \tilde\Lambda(w)\leq c_2 \quad {\rm \ for\ }\sigma {\rm\ a.e.\ } w\in bD.$$

We also denote by $ B_r(w) =\{ z\in bD:\ {\tt d}(w,z)<r \}$  the  boundary balls determined via the quasidistance ${\tt d }$.
By \cite[Proposition 3.5]{LS2014}, we also have
\begin{align}\label{lambdab-1}
\lambda\big(B_r(w) \big)\approx r^{2n},\quad 0<r\leq 1.
\end{align}

The Cauchy--Leray integral of a suitable function $f$ on $bD$, denoted ${\bf C}(f)$, is formally defined by
$$  {\bf C}(f)(z) =\int_{bD} {f(w)\over \Delta(w,z)^n}d\lambda(w),\quad z\in D. $$

When restricting $z$ to the boundary $bD$, we have the Cauchy--Leray transform $f\mapsto \mathcal C(f)$, defined as
$$  {\mathcal C}(f)(z) =\int_{bD} {f(w)\over \Delta(w,z)^n}d\lambda(w),\quad z\in bD, $$
where the function $f$ satisfies the H\"older-like condition
$$ |f(w_1)-f(w_2)|\lesssim {\tt d}(w_1,w_2)^\alpha,\quad 
 w_1,w_2\in bD, $$
  for some $0<\alpha\leq 1$.


Take $K(w,z)$ to be the function defined for $w,z\in bD$, with $w\not= z$, by
$$K(w,z) = {1\over \Delta(w,z)^n}.$$
This function is the ``kernel" of the operator  $\mathcal C$, in the sense that
$$  \mathcal C(f)(z)= \int_{bD} K(w,z)f(w)d\lambda(w), $$
whenever $z$ lies outside of the support of $f$ and $f$ satisfies the H\"older-like condition for some $\alpha$.
The size and regularity estimates that are relevant for us are:
\begin{align}\label{ck}
& |K(w,z)|\lesssim {1\over{\tt d}(w,z)^{2n}};\nonumber\\
& |K(w,z) - K(w',z)|\lesssim  { {\tt d}(w,w')\over  {\tt d}(w,z)^{2n+1} },\quad {\rm if}\ {\tt d}(w,z)\geq c_K{\tt d}(w,w');\\
& |K(w,z) - K(w,z')|\lesssim  { {\tt d}(z,z')\over  {\tt d}(w,z)^{2n+1} },\quad {\rm if}\ {\tt d}(w,z)\geq c_K {\tt d}(z,z'),\nonumber
\end{align}
for an appropriate constant $c_K>0$. Moreover, for the size estimates we actually have
\begin{align}\label{kd}
 |K(w,z)|= {1\over{\tt d}(w,z)^{2n}}.
 \end{align}

We need the $L^p(bD)$ boundedness of Cauchy--Leray transform in \cite[Theorem 5.1]{LS2014}.

\begin{lemma}
The Cauchy--Leray transform $f\mapsto \mathcal C(f)$, initially defined for functions $f$ that satisfy the H\"older-like condition for some $\alpha$, extends to a bounded linear operator on $L^p(bD)$ for $1<p<\infty$.
\end{lemma}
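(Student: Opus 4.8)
The statement is \cite[Theorem 5.1]{LS2014}, and I would simply invoke it; nevertheless, let me outline the argument, since the kernel estimates \eqref{ck}--\eqref{kd} on which it rests are the same ones used below in the commutator results. The plan is to recognise $\mathcal C$ as a Calder\'on--Zygmund operator acting on the space of homogeneous type $(bD,{\tt d},d\lambda)$ and then to apply a $T(1)$-type theorem in that setting. The first step is to check that $K(w,z)=\Delta(w,z)^{-n}$ is a standard kernel: the size bound and the two H\"older-type smoothness estimates in \eqref{ck} are deduced from the $C^{1,1}$-regularity of $bD$ together with strong $\mathbb C$-linear convexity --- the latter furnishing the crucial lower bound $|\Delta(w,z)|\gtrsim|w-z|^{2}$ which, combined with the Lipschitz control on $\partial\rho$, controls the oscillation of $K$ in each variable. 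The same geometry yields \eqref{lambdab-1}, so that $(bD,{\tt d},d\lambda)$ is genuinely a space of homogeneous type with $bD$ compact and $d\lambda$ doubling.

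The second step is to verify the hypotheses of the $T(1)$ theorem: a weak boundedness property for $\mathcal C$ tested against normalised bump functions, together with $\mathcal C(1)\in{\rm BMO}(bD)$ and $\mathcal C^{t}(1)\in{\rm BMO}(bD)$, where $\mathcal C^{t}$ denotes the transpose of $\mathcal C$ with respect to $d\lambda$, whose kernel is $K(z,w)$. For $\mathcal C(1)$ one exploits the Cauchy--Fantappi\`e structure of the Leray integral: up to controlled error terms the principal-value integral $\int_{bD}K(w,z)\,d\lambda(w)$ is the Cauchy--Fantappi\`e integral of the constant function $1$, and a Stokes-theorem computation shows that this is a bounded function on $bD$, hence lies in ${\rm BMO}(bD)$. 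The weak boundedness property likewise follows from the kernel structure and the size normalisation of the bump functions.

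The main obstacle is $\mathcal C^{t}(1)$. Unlike the one-variable Cauchy kernel, $K(w,z)$ is not antisymmetric when $n\ge 2$, so $\mathcal C(1)$ and $\mathcal C^{t}(1)$ cannot be treated symmetrically, and the transpose does not obviously inherit a Cauchy--Fantappi\`e representation. One shows $\mathcal C^{t}(1)\in{\rm BMO}(bD)$ by exploiting the special form of the Leray--Levi measure $d\lambda$ as the pullback to $bD$ of an explicit $(2n-1)$-form and performing an integration-by-parts (Stokes) argument to extract the required cancellation; this is the technically heaviest part of \cite{LS2014}, and it is here that the $C^{1,1}$ hypothesis is used most essentially. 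Once the three conditions are in hand, the $T(1)$ theorem delivers $L^{2}(bD)$ boundedness, and standard Calder\'on--Zygmund theory on spaces of homogeneous type upgrades this to $L^{p}(bD)$ for every $1<p<\infty$; finally, since functions satisfying the H\"older-like condition are dense in $L^{p}(bD)$, the operator extends unambiguously to all of $L^{p}(bD)$, giving the stated conclusion.
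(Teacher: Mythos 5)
Your proposal matches the paper exactly: the paper offers no proof of this lemma beyond citing \cite[Theorem 5.1]{LS2014}, which is precisely what you do first. Your additional sketch of Lanzani--Stein's argument (standard kernel estimates for $K(w,z)=\Delta(w,z)^{-n}$ on the space of homogeneous type $(bD,{\tt d},d\lambda)$, a $T(1)$-type theorem with the cancellation conditions verified by Stokes/integration-by-parts exploiting the $C^{1,1}$ regularity, then Calder\'on--Zygmund theory and density to pass from $L^2$ to $L^p$) is a faithful summary of that reference and is consistent with the kernel bounds \eqref{ck}--\eqref{kd} and \eqref{lambdab-1} recorded in the paper.
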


\begin{proof}[Proof of Theorem \ref {weight1} and \ref {Cauchy-Leray}]
We point out that the proof of Theorem \ref {weight1} follows from the proof of Theorem 1.5 in \cite{DLLW}, and the proof of Theorem \ref {Cauchy-Leray} follows from the proof of Theorem \ref {cauchy} and Theorem \ref{vmo}. In fact, these are simpler, since the operator $\mathcal C$ is a Calder\'on--Zygmund operator.
\end{proof}

\color{black}

\bigskip

{\bf Acknowledgement:} This work was supported by Natural Science Foundation
of China (Grant Nos. 11671185, 11701250 and 11771195), Natural Science Foundation of Shandong
Province (Grant Nos. ZR2018LA002 and ZR2019YQ04) and the State Scholarship Fund of China (No. 201908440061).

\bibliographystyle{amsplain}

\end{document}